\numberwithin{equation}{section}
\newtheorem{theorem}{Theorem}[section]
\newtheorem{proposition}[theorem]{Proposition}
\newtheorem{lemma}[theorem]{Lemma}
\newtheorem{corollary}[theorem]{Corollary}
\theoremstyle{definition}
\theoremstyle{remark}
\newtheorem{remark}[theorem]{Remark}
\newcommand{\Z}{\mathbb{Z}}
\newcommand{\U}{\mathrm{U}}
\newcommand{\SU}{\mathrm{SU}}
\newcommand{\Sp}{\mathrm{Sp}}
\newcommand{\Spin}{\mathrm{Spin}}
\newcommand{\E}{\mathrm{E}}
\newcommand{\F}{\mathrm{F}}
\newcommand{\G}{\mathrm{G}}
\newcommand{\g}{\mathcal{G}}
\newcommand{\ch}{\mathrm{ch}}
\newcommand{\map}{\mathrm{map}}
\title{Odd primary homotopy types of the gauge groups of exceptional Lie groups}
\author{Sho Hasui} 
\address{Faculty of Liberal Arts and Sciences, Osaka Prefecture University, Sakai, 599-8531, Japan}
\email{s.hasui@las.osakafu-u.ac.jp} 
\author{Daisuke Kishimoto}
\address{Department of Mathematics, Kyoto University, Kyoto, 606-8502, Japan}
\email{kishi@math.kyoto-u.ac.jp}
\author{Tseleung So}
\address{Mathematical Sciences, University of Southampton, Southampton SO17 1BJ, United Kingdom}
\email{tls1g14@soton.ac.uk} 
\author{Stephen Theriault}
\address{Mathematical Sciences, University of Southampton, Southampton SO17 1BJ, United Kingdom}
\email{s.d.theriault@soton.ac.uk} 
\subjclass[2010]{Primary 55P15, Secondary 54C35}
\keywords{gauge group, homotopy type, exceptional Lie group, Samelson product}
\begin{document}

\baselineskip 17pt

\maketitle

\begin{abstract}
The $p$-local homotopy types of gauge groups of principal $G$-bundles over $S^4$ are classified when $G$ is a compact connected exceptional Lie group without $p$-torsion in homology except for $(G,p)=(\E_7,5)$.
\end{abstract}


\section{Introduction}

Let $G$ be  a topological group and $P$ be a principal $G$-bundle over a base $X$. The \emph{gauge group} $\g(P)$ is the topological group of $G$-equivariant self-maps of $P$ covering the identity map of $X$. As $P$ ranges over all principal $G$-bundles over $X$, we get a collection of gauge groups $\g(P)$. In \cite{K}, Kono classified the homotopy types in the collection of $\g(P)$ when $G=\SU(2)$ and $X=S^4$. Since then, the homotopy theory of gauge groups has been deeply developed in connection with mapping spaces and fiberwise homotopy theory incorporating higher homotopy structures. The classification of the homotopy types has been its motivation and considerable progress has been made towards it.

Suppose that $G$ is a compact connected simple Lie group. Then there is a one-to-one correspondence between principal $G$-bundles over $S^4$ and $\pi_3(G)\cong\Z$. Let $\g_k$ denote the gauge group of the principal $G$-bundle over $S^4$ corresponding to $k\in\Z$. Most of the classification of the homotopy types of gauge groups has been done in this setting, generalizing the situation studied by Kono \cite{C,HK,HKKS,KKKT,KTT,T1,T2}. As we will see, the homotopy type of $\g_k$ is closely related with a certain Samelson product in $G$. It is shown in \cite{KKT} that if $p$ is a prime large with respect to the rank of $G$, then the classification of the $p$-local homotopy types of $\g_k$ reduces to determining the order of this Samelson product. So in \cite{KK,T3}, the $p$-local classification is obtained by calculating this Samelson product when $G$ is a classical group.

However, when $G$ is exceptional, the classification, up to one factor of 2,  for $G=\G_2$ \cite{KTT} is the only one result obtained so far. The aim of this paper is to classify the $p$-local homotopy types of $\g_k$ when $G$ is exceptional and has no $p$-torsion in $H_*(G;\Z)$. Recall that exceptional Lie groups consist of five types $\G_2,\F_4,\E_6,\E_7,\E_8$ and they have no $p$-torsion in $H_*(G;\Z)$ if and only if
\begin{equation}
\label{torsion_free}
\G_2,\;p\ge 3;\quad\F_4,\E_6,\E_7,\;p\ge 5;\quad\E_8,\;p\ge 7.
\end{equation}
All but one case is included in our results. We define an integer $\gamma(G)$ by:
\begin{alignat*}{3}
\gamma(\G_2)&=3\cdot 7&\gamma(\F_4)&=5^2\cdot 13&\gamma(\E_6)&=5^2\cdot 7\cdot 13\\
\gamma(\E_7)&=7\cdot 11\cdot 19\qquad&\gamma(\E_8)&=7^2\cdot 11^2\cdot 13\cdot 19\cdot 31\qquad
\end{alignat*}
For integers $m$ and $n$, let $(m,n)$ denote the greatest common divisor of $m$ and $n$, and for an integer $k=p^rq$ with $(p,q)=1$, let $\nu_p(k)=p^r$ denote the $p$-component of $k$. Now we state our main result.

\begin{theorem}
\label{main}
Let $G$ be a compact connected exceptional simple Lie group without $p$-torsion in $H_*(G;\Z)$ except for $(G,p)=(\E_7,5)$. Then $\g_k$ and $\g_l$ are $p$-locally homotopy equivalent if and only if $\nu_p((k,\gamma(G)))=\nu_p((l,\gamma(G)))$.
\end{theorem}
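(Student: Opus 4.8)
The plan is to pass to classifying spaces, reduce the classification to the order of a single Samelson product, and then carry out that computation. Fix a generator $\epsilon\colon S^3\to G$ of $\pi_3(G)\cong\Z$, so that $\g_k$ is the gauge group of the principal bundle whose clutching function is $k\epsilon$. As in the cited works, $B\g_k$ is homotopy equivalent to the degree-$k$ component of $\map(S^4,BG)$, and the connecting map of the associated evaluation fibration gives a homotopy fibration
$$
\g_k\longrightarrow G\xrightarrow{\ \delta_k\ }\Omega^3_0 G ,
$$
in which $\delta_k$ is adjoint to the Samelson product $\langle k\epsilon,1_G\rangle=k\langle\epsilon,1_G\rangle\colon S^3\wedge G\to G$, hence represents $k[\delta_1]$ in the group $[G,\Omega^3_0 G]$ with $\delta_1$ adjoint to $\langle\epsilon,1_G\rangle$. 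Thus $\g_k\simeq\mathrm{hofib}(\delta_k)$, and its $p$-local homotopy type depends only on the cyclic subgroup of $[G,\Omega^3_0 G]_{(p)}$ generated by $k[\delta_1]_{(p)}$. Since $G$ is rationally a product of odd spheres, $[\delta_1]$ has finite order, and the theorem reduces to showing that the $p$-primary order of $\langle\epsilon,1_G\rangle$ is $\nu_p(\gamma(G))$.

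\emph{Sufficiency.} Suppose $\nu_p((k,\gamma(G)))=\nu_p((l,\gamma(G)))$. Granting the order claim, $k[\delta_1]_{(p)}$ and $l[\delta_1]_{(p)}$ generate subgroups of the cyclic group $\Z/\nu_p(\gamma(G))$ of the same order, hence the same subgroup. If that subgroup is trivial, then $\delta_k$ and $\delta_l$ are $p$-locally nullhomotopic and $\g_k\simeq_{(p)} G\times\Omega^4 G\simeq_{(p)}\g_l$. Otherwise $l[\delta_1]_{(p)}=c\cdot k[\delta_1]_{(p)}$ for some $c$ prime to $p$, and composing $\delta_k$ with the $c$-th power map of the loop space $\Omega^3_0 G$, which is a $p$-local homotopy equivalence, yields $\delta_l$ up to $p$-local homotopy; since homotopy fibres are preserved under postcomposition with an equivalence, $\g_k\simeq_{(p)}\g_l$.

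\emph{Necessity.} Conversely, a $p$-local homotopy equivalence $\g_k\simeq_{(p)}\g_l$ must force $\nu_p((k,\gamma(G)))=\nu_p((l,\gamma(G)))$, which one sees by reading the order of $[\delta_k]_{(p)}$ off the homotopy type of $\g_k$. Using the long exact homotopy sequence of the fibration and the fact that $(\delta_k)_*(x)=k\langle\epsilon,x\rangle$, the homotopy group of $\g_k$ in the degree where the Samelson product realizing the $p$-primary order of $\gamma(G)$ first appears contains a cyclic summand of order $\nu_p((k,\gamma(G)))$, split off from a $k$-independent summand; comparing these for $\g_k$ and $\g_l$ gives the claim. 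For primes $p$ large relative to the rank of $G$ this is exactly the reduction of \cite{KKT}, so only the cases $(\E_6,5)$, $(\E_7,7)$ and $(\E_8,7)$ need extra work: there $G$ is merely quasi-$p$-regular and the relevant dimensions overlap, so isolating the $k$-dependent summand requires a finer analysis of the homotopy groups in the fibration.

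It remains to compute the $p$-primary order of $\langle\epsilon,1_G\rangle$, which I expect to be the main obstacle. Using the $p$-local homotopy decomposition of $G$ — a product of odd spheres when $G$ is $p$-regular, and otherwise a product of $P^1$-towers $B(2n_1-1,2n_2-1,\dots)$ of sphere bundles — the Samelson product $\langle\epsilon,1_G\rangle$ splits, up to higher terms that one must show do not raise its order, into Samelson products $\langle\epsilon,\iota\rangle$ on the individual factors; each of these is then evaluated from the (largely stable) homotopy of spheres, the homotopy of the small sphere bundles that occur, and the $\alpha$-family in the image of $J$ at $p$. When $G$ is $p$-regular the answer is read directly off the $\alpha$-family, but for $\F_4,\E_6,\E_7,\E_8$ at the small primes the factors are genuine sphere bundles and the powers $p^2$ in $\gamma(G)$ arise only from nonsplit extensions in their homotopy groups, so one must track attaching maps and Steenrod operations with care; it is precisely this analysis that fails for the excluded pair $(\E_7,5)$, where the decomposition of $\E_7$ involves a five-stage tower. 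Carrying out the computation group by group and combining it with the two reductions above completes the proof.
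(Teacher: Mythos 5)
Your overall plan --- reduce the classification to the order of a Samelson product and then compute that order prime by prime --- is the right one and matches the paper's strategy, but as written there are two genuine gaps. First, the reduction itself. Your sufficiency argument via the $c$-th power map on $\Omega^3_0G$ is fine, but necessity is only asserted: ``reading the order of $[\delta_k]_{(p)}$ off the homotopy type of $\g_k$'' by locating a cyclic summand of order $\nu_p((k,\gamma(G)))$ in some homotopy group is neither carried out nor is it how this direction actually goes. The paper invokes the reduction theorem of Kishimoto--Kono--Tsutaya, whose proof rests on $G$ being \emph{retractile} at $p$ (there is a homology generating subspace $\lambda\colon A\to G$ with $\Sigma\lambda$ admitting a left homotopy inverse); this is what replaces $\langle\epsilon,1_G\rangle$ by $\langle\epsilon,\lambda\rangle$ --- exactly the ``higher terms that one must show do not raise the order'' which you leave unaddressed --- and what yields the converse implication. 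Moreover all of $(\E_6,5)$, $(\E_7,7)$, $(\E_8,7)$ are retractile, so no ``finer analysis of the homotopy groups in the fibration'' is needed there; conversely, you supply no such analysis, so on your own terms those cases remain open.

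Second, and more seriously, the computation of the $p$-primary order of the Samelson product is not done; it is only forecast. In the $p$-regular case the homotopy groups of spheres give only the upper bound $\gamma_i(G,p)\le p$; the non-triviality at $p=n_l+1$ is a substantive theorem about Samelson products in $p$-regular exceptional groups and cannot be ``read directly off the $\alpha$-family.'' In the quasi-$p$-regular and $p=7$ cases, knowing that $\pi_*(B(2n-1,2n+2p-3))$ contains a $\Z/p^2$ does not show the Samelson product generates it: the paper pins down the exact orders (e.g.\ $5^2$ for $\E_6$ at $p=5$, $11^2$ for $\E_8$ at $p=11$, $7^2$ for $\E_8$ at $p=7$) by lifting $\langle\epsilon,\lambda_i\rangle$ to the homotopy fibre $F$ of a representation $\rho\colon G\to\SU(\infty)$ and evaluating an injection $\Phi$ on $[S^3\wedge A_i,F]$ modulo the image of $\widetilde{K}(S^3\wedge A_i)$, using explicit Chern character data, supplemented by Steenrod operation and mod $p$ Wu formula arguments for $\gamma_2(\E_6,7)$, $\gamma_3(\E_7,11)$ and $\gamma_1(\E_7,7)$. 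None of this machinery appears in your proposal, so every entry of $\gamma(G)$ --- and hence the statement of the theorem itself --- is unverified.
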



\section{Gauge groups and Samelson products}

We first recall a relation between gauge groups and Samelson products. Let $G$ be a compact connected simple Lie group and let $\epsilon$ be a generator of $\pi_3(G)\cong\Z$. Then by definition $\g_k$ is the gauge group of a principal $G$-bundle over $S^4$ corresponding to $k\epsilon$. Let $\map(X,Y;f)$ denote the connected component of the mapping space $\map(X, Y)$ containing the map $f\colon X\to Y$. It is proved by Gottlieb \cite{G} (cf. \cite{AB}) that there is a homotopy equivalence
$$B\g_k\simeq\map(X,BG;k\bar{\epsilon}),$$
where $\bar{\epsilon}\colon S^4\to BG$ is the adjoint of $\epsilon\colon S^3\to G$. Then by evaluating at the basepoint of $S^4$, we get a homotopy fibration sequence
$$\g_k\to G\xrightarrow{\partial_k}\Omega^3_0G\to B\g_k\to BG,$$
so $\g_k$ is homotopy equivalent to the homotopy fiber of the map $\partial_k$. Thus we must identify the map $\partial_k$, which was done by Lang \cite{L}.

\begin{lemma}
\label{connecting}
The adjoint $S^3\wedge G\to G$ of the map $\partial_k$ is the Samelson product $\langle k\epsilon,1_G\rangle$.
\end{lemma}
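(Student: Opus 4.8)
The plan is to unwind $\partial_k$ through Gottlieb's identification, recognise it as the holonomy (fibre-transport) map of the evaluation fibration, match that map with a Whitehead product, and finally pass from $BG$ to $G$.

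First I would record that $\partial_k$ is obtained by looping the fibration $\Omega^3_0 G\to B\g_k\xrightarrow{\mathrm{ev}}BG$, where $B\g_k\simeq\map(S^4,BG;k\bar\epsilon)$ by Gottlieb's theorem, $\mathrm{ev}$ evaluates at the basepoint of $S^4$, and the fibre $\map_*(S^4,BG;k\bar\epsilon)$ is identified with $\Omega^3_0 G$ (it is a path-component of $\Omega^4 BG$, and all components of the loop space $\Omega^4 BG\simeq\Omega^3 G$ are homotopy equivalent). By the general theory of the Puppe sequence, the connecting map $\partial_k\colon G=\Omega BG\to\Omega^3_0 G$ is the restriction to $\Omega BG\times\{k\bar\epsilon\}$ of the holonomy action of $\Omega BG$ on this fibre: explicitly, $\partial_k(\gamma)$ is the endpoint of a homotopy $S^4\times I\to BG$ that starts at $k\bar\epsilon$ and drags the image of the basepoint of $S^4$ around the loop $\gamma$ (well-defined up to homotopy).

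The heart of the matter --- Lang's computation --- is to identify this holonomy map with a Whitehead product. One models the dragging homotopy by fixing a disc $D\cong\mathrm{cone}(S^3)$ about the basepoint of $S^4$, leaving $k\bar\epsilon$ unchanged off $D$ and, on $D$, ``inserting'' the loop $\gamma$ along the cone parameter; comparing this with the standard model for the Whitehead product $[\,k\bar\epsilon,\,\mathrm{ev}_{BG}\,]\colon\Sigma(S^3\wedge\Omega BG)=S^4\wedge\Omega BG\to BG$ of $k\bar\epsilon\colon S^4=\Sigma S^3\to BG$ with the counit $\mathrm{ev}_{BG}\colon\Sigma\Omega BG\to BG$ --- both maps being assembled from a map on the ``outer'' $S^4$-coordinate and a map on the ``inner'' $\Omega BG$-coordinate that agree on the wedge --- shows that the adjoint $S^4\wedge\Omega BG\to BG$ of $\partial_k$ (viewed as landing in $\Omega^4_0 BG$) is this Whitehead product, up to sign. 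I expect this point-set comparison, keeping track of the two cone/suspension coordinates and of which path-component one lands in, to be the main obstacle; the rest is formal.

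It then remains to translate to $G$. Under the standard adjunction identities and the $H$-equivalence $G\simeq\Omega BG$, the class $k\bar\epsilon\colon S^4\to BG$ is adjoint to $k\epsilon\colon S^3\to G$ and the counit $\mathrm{ev}_{BG}$ to $1_G$, and --- by the classical correspondence between Whitehead products in $Y$ and Samelson products in $\Omega Y$, applied to $Y=BG$ --- the Whitehead product $[k\bar\epsilon,\mathrm{ev}_{BG}]$ re-adjoints to the Samelson product $\langle k\epsilon,1_G\rangle\colon S^3\wedge G\to G$, again up to sign; hence so does the adjoint over $S^3$ of $\partial_k\colon G\to\Omega^3_0 G$. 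The sign is irrelevant, as it only replaces $\langle k\epsilon,1_G\rangle$ by $\langle -k\epsilon,1_G\rangle$ and in any event $\g_k\simeq\g_{-k}$. Alternatively one may bypass Gottlieb's theorem: a gauge transformation of $P_k$ is a section of the adjoint bundle $\mathrm{Ad}\,P_k=P_k\times_G G\to S^4$, which over $S^4=D^4_+\cup_{S^3}D^4_-$ is clutched by conjugation by $k\epsilon$; writing the section space as the homotopy pullback of $\map(D^4_+,G)\to\map(S^3,G)\leftarrow\map(D^4_-,G)$ and simplifying via $\map(D^4_\pm,G)\simeq G$ and $\map(S^3,G)\simeq G\times\Omega^3_0 G$, the $G$-factors cancel and $\g_k$ becomes the homotopy fibre of the map $G\to\Omega^3_0 G$ adjoint to $(s,g)\mapsto k\epsilon(s)\,g\,k\epsilon(s)^{-1}g^{-1}$, which is $\langle k\epsilon,1_G\rangle$ by definition of the Samelson product.
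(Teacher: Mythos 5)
The paper gives no proof of this lemma at all: it simply cites Lang's theorem on the connecting map of the evaluation fibration. Your first argument is a correct reconstruction of exactly that route --- Gottlieb's identification $B\g_k\simeq\map(S^4,BG;k\bar{\epsilon})$, the connecting map of the evaluation fibration as holonomy, Lang's identification of that holonomy with the Whitehead product $[k\bar{\epsilon},\mathrm{ev}_{BG}]$, and the standard adjunction carrying Whitehead products in $BG$ to Samelson products in $G$ (with the sign ambiguity correctly dismissed as harmless). Your alternative clutching-function argument, realising $\g_k$ as the section space of $\mathrm{Ad}\,P_k$ and reading off the commutator $(s,g)\mapsto k\epsilon(s)\,g\,k\epsilon(s)^{-1}g^{-1}$ directly, is also sound and is the more self-contained of the two, since it bypasses both Gottlieb's theorem and Lang's computation; either version suffices for the purposes of the paper.
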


By the linearity of Samelson products, we have:

\begin{corollary}
$\partial_k\simeq k\circ\partial_1$.
\end{corollary}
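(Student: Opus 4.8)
\emph{Proof proposal.} The plan is to deduce the statement from the naturality and additivity of the Samelson product together with Lemma~\ref{connecting}. First I would record that for $k\in\Z$ the class $k\epsilon\in\pi_3(G)$ is represented by the composite $\epsilon\circ\mu_k\colon S^3\to S^3\to G$, where $\mu_k\colon S^3\to S^3$ has degree $k$; equivalently $\mu_k$ is the $k$-fold sum of $1_{S^3}$ with respect to the co-$H$-structure of $S^3$.

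Next I would invoke the behaviour of the Samelson product in its first argument. Since $S^3$ is a co-$H$-space, the assignment $\alpha\mapsto\langle\alpha,1_G\rangle$ from $[S^3,G]$ to $[S^3\wedge G,G]$ is a homomorphism and is natural with respect to precomposition; this is exactly the linearity of Samelson products referred to in the statement. Combining this with the previous step gives
$$\langle k\epsilon,1_G\rangle=\langle\epsilon\circ\mu_k,1_G\rangle\simeq\langle\epsilon,1_G\rangle\circ(\mu_k\wedge 1_G).$$

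I would then pass to adjoints. Under the natural equivalence $\map(S^3\wedge G,G)\simeq\map(G,\Omega^3 G)$, precomposition with $\mu_k\wedge 1_G$ on the source corresponds to postcomposition with the self-map of $\Omega^3 G$ given by precomposition with $\mu_k$ on the looping coordinate. By Lemma~\ref{connecting} the adjoint of $\langle k\epsilon,1_G\rangle$ is $\partial_k$ and the adjoint of $\langle\epsilon,1_G\rangle$ is $\partial_1$, so $\partial_k$ is homotopic to $\partial_1$ followed by this self-map of $\Omega^3_0 G$.

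Finally I would identify that self-map with the map denoted $k$. Since $\Omega^3 G$ is a loop space, the map induced on it by a degree-$k$ self-map of the suspension coordinate is homotopic to the $k$-th power map, and restricting to the identity component $\Omega^3_0 G$ this is precisely the map $k$; hence $\partial_k\simeq k\circ\partial_1$. I do not anticipate a genuine difficulty here: the only point requiring any care is the bookkeeping in the last two steps, namely checking that "multiplication by $k$ on $\Omega^3_0 G$", the $k$-th power map, and the map induced by $\mu_k$ all coincide up to homotopy, which is standard for loop spaces.
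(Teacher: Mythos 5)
Your argument is correct and is exactly the paper's (unstated) proof: the paper simply cites the linearity of Samelson products, and your steps --- writing $k\epsilon=\epsilon\circ\mu_k$, using naturality to get $\langle k\epsilon,1_G\rangle\simeq\langle\epsilon,1_G\rangle\circ(\mu_k\wedge 1_G)$, and then identifying the adjoint of precomposition by $\mu_k$ with the $k$-th power map on $\Omega^3_0G$ --- are precisely the standard unpacking of that phrase together with Lemma~\ref{connecting}. No gaps; the Eckmann--Hilton point you flag at the end is indeed the only thing to check and is standard.
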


Hereafter we localize spaces and maps at a prime $p$. We next recall the result of Kono, Tsutaya and the second author \cite{KKT} on the $p$-local homotopy types of $\g_k$. An H-space $X$ is called \emph{rectractile} if the following conditions are satisfied:

\begin{enumerate}
\item there is a space $A$ such that $H_*(X)=\Lambda(\widetilde{H}_*(A))$;
\item there is a map $\lambda\colon A\to X$ which is the inclusion of the generating set in homology.
\item $\Sigma\lambda$ has a left homotopy inverse.
\end{enumerate}

The space $A$ is often called a homology generating subspace. By \cite{T0} it is known that a compact connected simple Lie group $G$ is rectractile at the prime $p$ if $(G,p)$ is in the following table.

\renewcommand{\arraystretch}{1.2}
\begin{table}[htbp] 
\centering
\begin{tabular}{llllll} 
$\SU(n)$&$(p-1)^2+1\ge n$&$\G_2$&$p\ge 3$\\
$\Sp(n),\Spin(2n+1)$&$(p-1)^2+1\ge 2n$&$\F_4,\E_6$&$p\ge 5$\\
$\Spin(2n)$&$(p-1)^2+1\ge 2(n-1)$&$\E_7,\E_8$&$p\ge 7$
\end{tabular}
\end{table}

Note that by \eqref{torsion_free}, there is no $p$-torsion in $H_*(G;\Z)$ if and only if $G$ is rectractile at the prime~$p$, except for $(G,p)=(\G_2,3),(\E_7,5)$. Let $\gamma(G,p)$ be the order of the Samelson product $\langle\epsilon,\lambda\rangle$. In \cite{KKT} the following reduction of the classification of the $p$-local homotopy types of $\g_k$ is proved.

\begin{theorem}
\label{KKT}
Let $G$ be a compact connected simple Lie group which is rectractile at the prime~$p$ except for 
$(G,p)=(\G_2,3)$. Then $\g_k$ and $\g_l$ are $p$-locally homotopy equivalent if and only if $(k,\gamma(G,p))=(l,\gamma(G,p))$.
\end{theorem}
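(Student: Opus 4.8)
The plan is to work throughout with the homotopy fibration $\g_k\to G\xrightarrow{\partial_k}\Omega^3_0 G$, using $\partial_k\simeq k\circ\partial_1$ and the fact that $\partial_1$ is adjoint to the Samelson product $\langle\epsilon,1_G\rangle$. Applying naturality of Samelson products in the second variable to $\lambda\colon A\to G$ gives $\langle\epsilon,\lambda\rangle\simeq\langle\epsilon,1_G\rangle\circ(\Sigma^3\lambda)$, and since $\Sigma^3\lambda$ is a suspension, precomposition with it is a homomorphism of abelian groups $[\Sigma^3 G,G]\to[\Sigma^3 A,G]$; hence $\gamma(G,p)$, the order of $\langle\epsilon,\lambda\rangle$, divides the order of $\langle\epsilon,1_G\rangle$, which is the order of $\partial_1$ in $[G,\Omega^3_0 G]$. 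The heart of the matter is the reverse divisibility:

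\medskip
\noindent\emph{Key Lemma: the order of $\partial_1$ in $[G,\Omega^3_0 G]$ equals $\gamma(G,p)$.}
\medskip

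\noindent To prove it I would show, more generally, that for $\alpha\colon S^3\to G$ the Samelson product $\langle\alpha,1_G\rangle$ is null-homotopic if and only if $\langle\alpha,\lambda\rangle$ is, and then apply this to the multiples of $\epsilon$. This is where the rectractile hypothesis is used. Condition (3) gives a splitting $\Sigma^3 G\simeq\Sigma^3 A\vee C$ with $\Sigma^3\lambda$ the inclusion of the first summand, so $\langle\alpha,1_G\rangle$ is null if and only if both $\langle\alpha,\lambda\rangle$ and the restriction $\langle\alpha,1_G\rangle|_C$ are null; condition (1) shows the cells of $C$ are products of the cells of $A$. One then argues by induction over a cell structure on $G$ whose cells are products of cells of $A$: the derivation property of Samelson products, a consequence of the group commutator identity $[\alpha,xy]=[\alpha,x]\cdot x[\alpha,y]x^{-1}$, shows that on each successive cell the restriction of $\langle\alpha,1_G\rangle$ is built from $\langle\alpha,\lambda\rangle$ and earlier stages, so $\langle\alpha,1_G\rangle|_C$ vanishes once $\langle\alpha,\lambda\rangle$ does. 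This inductive bookkeeping, relying on the explicit rectractile structure of \cite{T0}, is the main obstacle; it also shows why $(\G_2,3)$ must be excluded, since there $G$ is rectractile but its homology generating subspace is not a wedge of spheres.

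Granting the Key Lemma, the ``if'' direction is formal. If $(k,\gamma(G,p))=(l,\gamma(G,p))$ then, $\gamma(G,p)$ being a power of $p$, there is an integer $a$ with $(a,p)=1$ and $ak\equiv l\pmod{\gamma(G,p)}$, and the $a$-th power map $a\colon G\to G$ is a $p$-local homotopy equivalence. By bilinearity of the Samelson product, $\partial_k\circ a$ is adjoint to $a\langle k\epsilon,1_G\rangle=\langle(ak)\epsilon,1_G\rangle$, so $\partial_k\circ a\simeq\partial_{ak}=\partial_l+(ak-l)\partial_1\simeq\partial_l$, the final step holding because $\gamma(G,p)\mid(ak-l)$ and $\partial_1$ has order $\gamma(G,p)$. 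Then $a\colon G\to G$ and $\mathrm{id}\colon\Omega^3_0 G\to\Omega^3_0 G$ give a homotopy-commutative square with horizontal maps $\partial_l$ and $\partial_k$, both verticals equivalences, and passing to homotopy fibers yields a $p$-local homotopy equivalence $\g_l\simeq\g_k$.

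For the ``only if'' direction, suppose $\g_k\simeq_p\g_l$. Choose a wedge summand $S^{d}$ of $A$ (a sphere, since $p$-locally $A$ is a wedge of spheres in these cases) on which $\langle\epsilon,\lambda\rangle$ realises its full order $\gamma(G,p)$; then $(\partial_1)_*\colon\pi_d(G)\to\pi_{d+3}(G)$ carries a free generator of $\pi_d(G)$ to a class of order exactly $\gamma(G,p)$, while $\gamma(G,p)\cdot\partial_1\simeq\ast$ forces the exponent of the image to be $\gamma(G,p)$. The long exact homotopy sequence of $\g_k\to G\xrightarrow{\partial_k}\Omega^3_0 G$, with $(\partial_k)_*=k(\partial_1)_*$, then expresses a subgroup of $\pi_{d-1}(\g_k)$ as $\mathrm{coker}\big(k(\partial_1)_*\colon\pi_d(G)\to\pi_{d+3}(G)\big)$, and one extracts from this a numerical homotopy invariant of $\g_k$ — as in Kono's computation $\pi_2(\g_k)\cong\Z/(12,k)$ for $G=\SU(2)$ — which is a prescribed strictly monotone function of $(k,\gamma(G,p))$; its invariance under the equivalence gives $(k,\gamma(G,p))=(l,\gamma(G,p))$. (Equivalently, one may show $\g_k\simeq_p\g_l$ forces $k\partial_1$ and $l\partial_1$ to agree up to self-equivalences of $G$ and of $\Omega^3_0 G$, and then compare orders.) Isolating this invariant cleanly, given that $\pi_{d+3}(G)$ may contain classes unrelated to $\langle\epsilon,\lambda\rangle$, is the remaining point requiring care.
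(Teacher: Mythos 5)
You should first be aware that the paper contains no proof of this statement: Theorem~\ref{KKT} is quoted from \cite{KKT}, and the only original content surrounding it is the remark that the hypotheses there can be relaxed to the retractile property alone. So your proposal is really being measured against the argument in \cite{KKT}. Your overall architecture does match its skeleton: the reduction to the order of $\partial_1$ in $[G,\Omega^3_0G]$, the Key Lemma that this order equals $\gamma(G,p)$ (proved in \cite{KKT} via the suspension splitting of $\Sigma G$ into smash powers of $A$ coming from conditions (1)--(3), together with the derivation property of Samelson products), and the use of a unit $a$ with $ak\equiv l\pmod{\gamma(G,p)}$ to build the equivalence. Your ``induction over a cell structure whose cells are products of cells of $A$'' is a rougher version of the smash-power splitting, but it is the right idea.

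There are, however, genuine gaps. First, in the ``if'' direction you precompose $\partial_k$ with the power map $a\colon G\to G$ and invoke ``bilinearity'' to write $\langle k\epsilon,a\cdot 1_G\rangle=a\langle k\epsilon,1_G\rangle$; linearity of a Samelson product in a given variable is only automatic when that variable is carried by a co-H space, and $G$ is not one. The standard fix is to post-compose with the $a$-th power map on $\Omega^3_0G$, which is an H-map of a loop space and a $p$-local equivalence, giving $a\circ\partial_k\simeq\partial_{ak}\simeq\partial_l$ cleanly. Second, and more seriously, the ``only if'' direction is where the real content of the theorem lies and you have not supplied it: you concede that isolating the numerical invariant is ``the remaining point requiring care,'' but that is precisely the necessity half of the statement, and your proposed setup for it is false as stated --- $A$ is not $p$-locally a wedge of spheres in most cases to which this theorem is applied (for $\E_6$ at $p=5$, for instance, $A=A(3,11)\vee A(15,23)\vee A(9,17)$ and no summand is a wedge of spheres). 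The argument in \cite{KKT} instead extracts an invariant from $[A,\g_k]$ via the exact sequence obtained by applying $[A,-]$ to the fibration $\g_k\to G\xrightarrow{\partial_k}\Omega^3_0G$, again using retractility, and needs no sphere splitting of $A$. For the same reason, your parenthetical explanation of why $(\G_2,3)$ is excluded (``its homology generating subspace is not a wedge of spheres'') cannot be correct, since that feature is shared by many cases the theorem does cover.
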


\begin{remark}
The range of $p$ in \cite{KKT} is smaller than the above statement because the range in \cite{KKT} was taken so that $G$ is rectractile and the mod $p$ decomposition satisfies a certain universality. However, we can easily see that only the retractile property was used in \cite{KKT}, so we can improve the range of $p$ as above. 
\end{remark}

We set notation for $G$. By the classical result of Hopf, any connected finite H-space is rationally homotopy equivalent to the product of odd dimensional spheres $S^{2n_1-1}\times\cdots\times S^{2n_l-1}$ for $n_1\le\cdots\le n_l$. The sequence $(n_1,\cdots, n_l)$ is called the type of $X$. Hereafter, we assume that $G$ is of type $(n_1,\cdots, n_l)$, without $p$-torsion, and rectractile at $p$.

We next decompose the Samelson product $\langle\epsilon,\lambda\rangle$ in $G$ into small pieces. Recall that since we are localizing at $p$, $G$ decomposes as
$$G\simeq B_1\times\cdots\times B_{p-1}$$
where the type of $B_i$ consists of the integers $n$ in the type of $G$ with $2n-1$ congruent to $2i+1$ modulo $2(p-1)$ \cite{MNT}. This is called the mod $p$ decomposition of $G$. If the rank of $B_i$ is less than~$p$, then there is a subspace $A_i$ of $B_i$ such that $B_i$ is rectractile with respect to the inclusion $A_i\to B_i$. From this, one can deduce the retractile property for $G$ as above, so we may take the space $A$ as $A=A_1\vee\cdots\vee A_{p-1}$. Then it follows that the Samelson product $\langle\epsilon,\lambda\rangle$ is the direct sum of $\langle\epsilon,\lambda_1\rangle,\ldots,\langle\epsilon,\lambda_{p-1}\rangle$, where $\lambda_i\colon A_i\to G$ is the restriction of $\lambda$ to $A_i$. Let $\gamma_i(G,p)$ be the order of $\langle\epsilon,\lambda_i\rangle$. Then we have:

\begin{proposition}
\label{max}
If $G$ is rectractile at the prime $p$, then 
$$\gamma(G,p)=\max\{\gamma_1(G,p),\ldots,\gamma_{p-1}(G,p)\}.$$
\end{proposition}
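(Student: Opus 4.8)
The plan is to analyze how the order of a Samelson product behaves with respect to a direct sum decomposition of its target, and then invoke Theorem \ref{KKT} to identify the order of $\langle\epsilon,\lambda\rangle$ with the gauge-group invariant $\gamma(G,p)$. First I would make precise the claim that $\langle\epsilon,\lambda\rangle$ is the direct sum of the $\langle\epsilon,\lambda_i\rangle$: under the mod $p$ decomposition $G\simeq B_1\times\cdots\times B_{p-1}$ and the choice $A=A_1\vee\cdots\vee A_{p-1}$, the map $\lambda\colon A\to G$ restricts on each wedge summand $A_i$ to a map $\lambda_i\colon A_i\to B_i\hookrightarrow G$. Since the Samelson product $\langle\,\cdot\,,\,\cdot\,\rangle\colon S^3\wedge A\to G$ is bilinear and $S^3\wedge(A_1\vee\cdots\vee A_{p-1})\simeq (S^3\wedge A_1)\vee\cdots\vee(S^3\wedge A_{p-1})$, the composite $\langle\epsilon,\lambda\rangle$ is determined by its restrictions to the wedge summands, each of which is $\langle\epsilon,\lambda_i\rangle$. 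Hence the homotopy class of $\langle\epsilon,\lambda\rangle$ corresponds, under $[S^3\wedge A,G]\cong\bigoplus_i[S^3\wedge A_i,G]$, to the tuple $(\langle\epsilon,\lambda_1\rangle,\ldots,\langle\epsilon,\lambda_{p-1}\rangle)$.

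Next I would extract the order statement from this. In the abelian group $[S^3\wedge A,G]\cong\bigoplus_{i=1}^{p-1}[S^3\wedge A_i,G]$, an element $(x_1,\ldots,x_{p-1})$ has order equal to the least common multiple of the orders of the $x_i$. Therefore the order of $\langle\epsilon,\lambda\rangle$ is $\mathrm{lcm}\{\gamma_1(G,p),\ldots,\gamma_{p-1}(G,p)\}$. Now comes the point where $p$-locality is essential: each $\gamma_i(G,p)$ is a power of $p$ (the groups in question are finite $p$-groups after localization, since $S^3\wedge A_i$ is a finite complex and $G$ is $p$-locally an H-space with finite homotopy groups in each degree that are $p$-torsion away from the rational part, and $\langle\epsilon,\lambda_i\rangle$ lands in the torsion). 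For a finite collection of powers of $p$, the least common multiple equals the maximum. Hence the order of $\langle\epsilon,\lambda\rangle$ is $\max\{\gamma_1(G,p),\ldots,\gamma_{p-1}(G,p)\}$.

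Finally, I would close the loop with $\gamma(G,p)$. By definition $\gamma(G,p)$ is the order of $\langle\epsilon,\lambda\rangle$, so combining with the previous paragraph gives $\gamma(G,p)=\max\{\gamma_1(G,p),\ldots,\gamma_{p-1}(G,p)\}$, which is exactly the assertion of the Proposition. The main obstacle, such as it is, is the bookkeeping in the first paragraph: one must be careful that the direct-sum decomposition of $[S^3\wedge A,G]$ genuinely respects the Samelson product structure, i.e.\ that restricting the bilinear pairing to wedge summands of $A$ reproduces the pairings $\langle\epsilon,\lambda_i\rangle$ with no cross terms. This follows from the additivity of Samelson products on a wedge in the second variable together with the splitting $S^3\wedge(\bigvee_i A_i)\simeq\bigvee_i(S^3\wedge A_i)$, but it is the one place where a clean statement is needed rather than a routine estimate. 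Everything after that is the elementary observation that, $p$-locally, $\mathrm{lcm}$ and $\max$ coincide on powers of $p$.
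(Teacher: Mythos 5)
Your proposal is correct and follows the same route the paper intends: the paper states Proposition~\ref{max} without a separate proof, relying on the immediately preceding observation that $A=A_1\vee\cdots\vee A_{p-1}$ makes $\langle\epsilon,\lambda\rangle$ the direct sum of the $\langle\epsilon,\lambda_i\rangle$, so its order is the least common multiple of the $\gamma_i(G,p)$, which equals the maximum since these are all powers of $p$ after localization. Your write-up simply makes explicit the two points the paper leaves implicit (the splitting $S^3\wedge\bigl(\bigvee_i A_i\bigr)\simeq\bigvee_i(S^3\wedge A_i)$ respecting the Samelson pairing, and finiteness of $[S^3\wedge A_i,G]$ from the even-cell structure of $S^3\wedge A_i$), and the passing reference to Theorem~\ref{KKT} in your opening plan is unnecessary since $\gamma(G,p)$ is defined as the order of $\langle\epsilon,\lambda\rangle$.
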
 

Thus if we calculate $\gamma_{i}(G,p)$ for $1\leq i\leq p-1$ for a given pair $(G,p)$ 
then by Proposition~\ref{max} we know $\gamma(G,p)$ and by Theorem~\ref{KKT} we can 
describe the $p$-local homotopy types of the corresponding gauge groups. These calculations 
are carried out in the subsequent sections.

The following table lists explicit mod $p$ decompositions of exceptional Lie groups without $p$-torsion in homology except for $\G_2$, where $B(2m_1-1,\ldots,2m_k-1)$ is an H-space of type $(m_1,\ldots,m_k)$. One can easily deduce the types of exceptional Lie groups from this table.

\renewcommand{\arraystretch}{1.2}
\begin{table}[H] 
\centering
\begin{tabular}{lll} 
$\F_4$&$p=5$&$B(3,11)\times B(15,23)$\\
&$p=7$&$B(3,15)\times B(11,23)$\\
&$p=11$&$B(3,23)\times S^{11}\times S^{15}$\\ 
&$p\geq 13$ & $S^{3}\times S^{11}\times S^{15}\times S^{23}$\\ 
$\E_6$&$p=5$&$\F_4\times B(9,17)$\\
&$p\ge 7$&$\F_4\times S^9\times S^{17}$\\
$\E_7$&$p=5$&$B(3,11,19,27,35)\times B(15,23)$\\
&$p=7$&$B(3,15,27)\times B(11,23,35)\times S^{19}$\\
&$p=11$&$B(3,23)\times B(15,35)\times S^{11}\times S^{19}\times S^{27}$\\
&$p=13$&$B(3,27)\times B(11,35)\times S^{15}\times S^{19}\times S^{23}$\\
&$p=17$&$B(3,35)\times S^{11}\times S^{15}\times S^{19}\times S^{23}\times S^{27}$\\ 
&$p\geq 19$&$S^{3}\times S^{11}\times S^{15}\times S^{19}\times S^{23}\times S^{27}\times S^{35}$\\
$\E_8$&$p=7$&$B(3,15,27,39)\times B(23,35,47,59)$\\
&$p=11$&$B(3,23)\times B(15,35)\times B(27,47)\times B(39,59)$\\
&$p=13$&$B(3,27)\times B(15,39)\times B(23,47)\times B(35,59)$\\
&$p=17$&$B(3,35)\times B(15,47)\times B(27,59)\times S^{23}\times S^{39}$\\
&$p=19$&$B(3,39)\times B(23,59)\times S^{15}\times S^{27}\times S^{35}\times S^{47}$\\
&$p=23$&$B(3,47)\times B(15,59)\times S^{23}\times S^{27}\times S^{35}\times S^{39}$\\
&$p=29$&$B(3,59)\times S^{15}\times S^{23}\times S^{27}\times S^{35}\times S^{39}\times S^{47}$\\ 
&$p\geq 31$&$S^{3}\times S^{15}\times S^{23}\times S^{27}\times S^{35}\times S^{39}\times S^{47}\times S^{59}$
\end{tabular}
\end{table}

If each $B_i$ is a sphere, then we call $G$ \emph{$p$-regular}. If each $B_i$ is a sphere or a rank 2 H-space $B(2m-1,2m+2p-3)$, then we call $G$ \emph{quasi-$p$-regular}. In the following we calculate $\gamma(G,p)$ for different cases.


\section{The $p$-regular case}

Throughout this section, we let $(G,p)$ be
\begin{equation}
\label{regular}
\F_4,\E_6,\;p\ge 13;\quad\E_7,\;p\ge 19;\quad\E_8,\;p\ge 31.
\end{equation}
By the above table, this is equivalent to the case when $G$ is $p$-locally homotopy equivalent to a product of spheres, and such a $G$ is called $p$-regular. This is also equivalent to $p\ge n_l+1$.

The homotopy groups of $p$-localized spheres are known for low dimension.
\begin{theorem}[Toda \cite{To}]\label{homotopy group of sphere}
Localized at $p$, we have
\[
\pi_{2n-1+k}(S^{2n-1})\cong\begin{cases}
\Z/p\Z	&\text{for $k=2i(p-1)-1$, $1\leq i\leq p-1$}\\
\Z/p\Z	&\text{for $k=2i(p-1)-2$, $n\leq i\leq p-1$}\\
0		&\text{other cases for $1\leq k\leq 2p(p-1)-3$},
\end{cases}
\]
\end{theorem}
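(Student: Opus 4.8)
This is a classical computation of Toda; the plan of proof is as follows. The essential point is that the range $k\le 2p(p-1)-3$ lies just below the first genuinely unstable $p$-primary phenomenon, so that almost everything reduces to the low-dimensional stable stems. For an odd sphere $S^{2n-1}$, Freudenthal's theorem gives $\pi_{2n-1+k}(S^{2n-1})\cong\pi^s_k$ for $k\le 2n-3$ (and a surjection onto $\pi^s_k$ for $k=2n-2$); in the metastable range just beyond this the difference is measured by the EHP (James) machinery localized at $p$ --- using Serre's $p$-local splitting $\Omega S^{2m}\simeq_p S^{2m-1}\times\Omega S^{4m-1}$ for the even-dimensional loop spaces and Toda's $p$-primary EHP fibrations for the odd spheres. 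Iterating these over the relevant dimensions expresses $\pi_{2n-1+k}(S^{2n-1})$ in terms of (i) the stable stems $\pi^s_k$ and (ii) the images of a few stable classes under the connecting maps of the EHP sequences.

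The stable input is the classical fact that, $p$-locally, $\pi^s_k=0$ for $0<k<2p(p-1)-2$ except that $\pi^s_{2i(p-1)-1}\cong\Z/p$ for $1\le i\le p-1$, generated by the elements $\alpha_i$ in the image of the $J$-homomorphism; the first class outside this range is $\beta_1\in\pi^s_{2p(p-1)-2}$, which is exactly why the bound $2p(p-1)-3$ appears in the statement. This accounts for the first case $k=2i(p-1)-1$, $1\le i\le p-1$: for $n$ large relative to $i$ it is immediate from Freudenthal, and for small $n$ one checks through the EHP sequence that $\alpha_i$ still desuspends to $S^{2n-1}$ and retains order exactly $p$.

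The second family, $k=2i(p-1)-2$ with $n\le i$, is genuinely unstable and should come out of Toda's EHP sequences together with the Serre splitting: the generator $\alpha_i$, viewed on the bottom sphere, supports a nontrivial James--Hopf invariant precisely when $n\le i$, and tracking this through the appropriate EHP exact sequence produces an extra copy of $\Z/p$ in $\pi_{2n-1+2i(p-1)-2}(S^{2n-1})$ as a kernel or cokernel term. All other groups in the range vanish, since the only stable classes available are the $\alpha_i$ and they contribute only in the two listed patterns.

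The main obstacle is the bookkeeping inherent in the EHP/James analysis: one must verify that (a) nothing else enters below dimension $2n-1+2p(p-1)-3$ --- in particular that $\beta_1$ and all lower unstable families are genuinely excluded --- and (b) each nonzero group is \emph{exactly} $\Z/p$, with no extension producing a larger cyclic group. Controlling these extensions requires the precise additive and multiplicative structure of the $\alpha$-family (for instance $p\alpha_1=0$ and the Toda bracket relations among the $\alpha_i$, which feed the EHP connecting maps) rather than merely the abstract list of stable stems.
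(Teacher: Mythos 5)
The paper does not prove this statement at all: it is imported verbatim from Toda's book \cite{To} as a known input, so there is no in-paper argument to compare against. Your outline follows what is essentially Toda's own strategy, and it correctly identifies every structural ingredient: the $p$-local vanishing of the stable stems below $\beta_1$ except for the image-of-$J$ classes $\alpha_i$ of order $p$ in stems $2i(p-1)-1$, the fact that $\beta_1$ sits in stem $2p(p-1)-2$ and is exactly what the bound $k\le 2p(p-1)-3$ excludes, the Serre splitting $\Omega S^{2m}\simeq_{(p)} S^{2m-1}\times\Omega S^{4m-1}$, and Toda's odd-primary EHP fibrations. Two caveats. First, what you have written is a plan, not a proof: the entire content of the theorem lives in the ``bookkeeping'' you defer, namely running the EHP exact sequences through the stated range and ruling out extensions; as a standalone argument it would not be acceptable, though as a reading of where the theorem comes from it is accurate. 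Second, your description of the second family is slightly off mechanism: the class in stem $2i(p-1)-2$ is not ``$\alpha_i$ supporting a nontrivial James--Hopf invariant'' but rather the image $P(\iota_{2ip-1})$ of the fundamental class under the boundary map of Toda's fibration $S^{2i-1}\to\Omega\hat S^{2i}\to\Omega S^{2ip-1}$; it is born on $S^{2i-1}$, desuspends to all lower odd spheres, and is killed by double suspension to $S^{2i+1}$, which is precisely why the condition is $n\le i$. Relatedly, the induction cannot be carried out purely on spheres: Toda's fibrations interpose the truncated James complexes $\hat S^{2n}$, whose homotopy groups must be computed in tandem (this is also where the $B(2n-1,2n+2p-3)$-type computations quoted later in the paper come from), and your sketch should acknowledge that extra layer.
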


By Theorem \ref{homotopy group of sphere} we have $p\cdot\pi_{2k}(S^{2n-1})=0$ for $0\le k\le n+p(p-1)-2$, so $p\cdot\pi_{2k}(G)=0$ for $0\le k\le p(p-1)$. Then since the Samelson product $\langle\epsilon,\lambda_i\rangle$ is a map from a $2n$-dimensional sphere into $G$ for $n\le n_l+1\le p$, we get:

\begin{proposition}
\label{upper_bound_regular}
If $G$ is $p$-regular, then $\gamma_i(G,p)\le p$ for any $i$.
\end{proposition}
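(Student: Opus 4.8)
The plan is to bound the order of each piece $\langle\epsilon,\lambda_i\rangle$ by showing that its target lies in a homotopy group that is annihilated by $p$. First I would recall from Proposition~\ref{max} (or rather its setup) that $\lambda_i\colon A_i\to G$ restricts the homology generating map to the $i$-th factor $B_i$ of the mod $p$ decomposition, and that in the $p$-regular case $B_i$ is a sphere, so $A_i = S^{2n-1}$ for some $n$ appearing in the type of $G$ with $n\le n_l+1\le p$. The Samelson product $\langle\epsilon,\lambda_i\rangle\colon S^3\wedge S^{2n-1}\to G$ is therefore a map out of $S^{2n+2}$, i.e.\ an element of $\pi_{2n+2}(G)$.

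Next I would argue that $\pi_{2n+2}(G)$ is $p$-torsion of exponent dividing $p$. Since $G$ is $p$-regular, $G\simeq S^{2n_1-1}\times\cdots\times S^{2n_l-1}$ $p$-locally, so $\pi_{2n+2}(G)\cong\bigoplus_j\pi_{2n+2}(S^{2n_j-1})$. By Toda's theorem (Theorem~\ref{homotopy group of sphere}) we have $p\cdot\pi_{2k}(S^{2m-1})=0$ for all $k$ in the range $0\le k\le m+p(p-1)-2$; taking $m\le n_l$ and noting $2n+2\le 2(n_l+1)+2$ is comfortably inside the stated range (this is exactly the estimate $0\le k\le p(p-1)$ quoted in the excerpt just before the proposition, using $p\ge n_l+1$), we conclude $p\cdot\pi_{2n+2}(S^{2n_j-1})=0$ for every factor, hence $p\cdot\pi_{2n+2}(G)=0$.

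Combining the two points: $\langle\epsilon,\lambda_i\rangle\in\pi_{2n+2}(G)$ and $p$ kills this group, so the order $\gamma_i(G,p)$ of $\langle\epsilon,\lambda_i\rangle$ divides $p$, which is exactly $\gamma_i(G,p)\le p$. I would simply assemble these observations into two or three sentences rather than belabor them.

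The only mildly delicate point — and the one I would state carefully — is the numerical verification that the relevant degree $2n+2$ falls within the range where Toda's vanishing applies, uniformly over all factors $S^{2n_j-1}$ and over all choices of $n$ occurring as a type of $G$. This is not hard: the largest type is $n_l$, and the hypothesis~\eqref{regular} is precisely arranged so that $p\ge n_l+1$, which forces $2n+2\le 2p(p-1)$ with room to spare; but it is the one place where one must be sure the list in~\eqref{regular} is the correct one, so I would cross-reference the mod $p$ decomposition table to confirm that~\eqref{regular} coincides with $p$-regularity. Everything else is immediate from the product decomposition and the additivity of homotopy groups over a product.
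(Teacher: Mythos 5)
Your proposal is correct and follows essentially the same route as the paper: identify $\langle\epsilon,\lambda_i\rangle$ as an element of an even homotopy group $\pi_{2k}(G)$ with $k\le p$, decompose $G$ as a product of spheres, and invoke Toda's theorem to see that $p$ annihilates that group. The degree check you flag as delicate is exactly the inequality $k\le n_l+1\le p\le p(p-1)$ recorded in the paper just before the proposition, so nothing further is needed.
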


On the other hand, the non-triviality of the Samelson product $\langle \lambda_i,\lambda_j\rangle$ is completely determined in \cite{HKO}, where $\lambda_i$ is the inclusion $S^{2n_i-1}\to G$ of the sphere factor in the mod $p$ decomposition of $G$.

\begin{theorem}
The Samelson product $\langle \lambda_i,\lambda_j\rangle$ is non-trivial if and only if $n_i+n_j=n_k+p-1$ for some $n_k$ in the type of $G$.
\end{theorem}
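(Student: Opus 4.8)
This statement is due to \cite{HKO}; we sketch a proof. Since $G$ is $p$-regular, $G\simeq S^{2n_1-1}\times\cdots\times S^{2n_l-1}$ $p$-locally, so $\langle\lambda_i,\lambda_j\rangle$ is an element of
$$\pi_{2n_i+2n_j-2}(G)\cong\bigoplus_{k=1}^{l}\pi_{2n_i+2n_j-2}(S^{2n_k-1}).$$
For the necessity of the condition, note that the source dimension $2n_i+2n_j-2$ is even while each $2n_k-1$ is odd, so Theorem \ref{homotopy group of sphere} shows $\pi_{2n_i+2n_j-2}(S^{2n_k-1})$ can be non-zero only if $2n_i+2n_j-2=2n_k-2+2m(p-1)$ for some integer $m\ge 1$, that is, $n_i+n_j=n_k+m(p-1)$. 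In the $p$-regular range \eqref{regular} we have $p\ge n_l+1$, hence $0<n_i+n_j-n_k<2(p-1)$ whenever $n_i+n_j-n_k>0$, which forces $m=1$. Therefore, if $n_i+n_j\neq n_k+p-1$ for every $k$, then $\pi_{2n_i+2n_j-2}(G)=0$ and $\langle\lambda_i,\lambda_j\rangle$ is null-homotopic.

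For the sufficiency, suppose $n_i+n_j=n_k+p-1$ for some $k$, which is then unique since the types of the exceptional groups have pairwise distinct entries; by the computation above the target $\pi_{2n_i+2n_j-2}(S^{2n_k-1})\cong\Z/p$ is generated by a suspension of Toda's element $\alpha_1$ (stem $2p-3=2(n_i+n_j-n_k)-1$). Write $G\simeq\Omega BG$ and let $\mu_i\colon S^{2n_i}\to BG$ be the adjoint of $\lambda_i$. Then the adjoint of $\langle\lambda_i,\lambda_j\rangle$ is, up to sign, the Whitehead product $[\mu_i,\mu_j]\colon S^{2n_i+2n_j-1}\to BG$, and since the adjunction is a bijection on homotopy classes it suffices to prove $[\mu_i,\mu_j]\neq 0$. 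This is a mod $p$ cohomology question. Recall $H^*(BG;\Z/p)\cong\Z/p[y_{2n_1},\dots,y_{2n_l}]$; the operation $\mathcal P^1 y_{2n_k}$ lies in degree $2n_k+2(p-1)=2n_i+2n_j$, hence may contain the monomial $y_{2n_i}y_{2n_j}$, and $\alpha_1$ is the first homotopy element detected by $\mathcal P^1$. Analysing the cofibre of $[\mu_i,\mu_j]$ one shows that $[\mu_i,\mu_j]\neq 0$ precisely when the coefficient of $y_{2n_i}y_{2n_j}$ in $\mathcal P^1 y_{2n_k}$ is a unit modulo $p$.

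The proof is thus reduced to a finite check. For each exceptional $G$ without $p$-torsion in the $p$-regular range, only finitely many primes $p$ admit a solution of $n_i+n_j=n_k+p-1$ with $n_i,n_j,n_k$ in the type of $G$ (since $n_i+n_j-n_k\le 2n_l-2$), and for each such admissible triple one must verify that the relevant $\mathcal P^1$-coefficient in $H^*(BG;\Z/p)$ is non-zero mod $p$. This verification, carried out from the known action of the Steenrod algebra on the mod $p$ cohomology of the classifying spaces of exceptional Lie groups together with the explicit types read off from the mod $p$ decomposition table of the previous section, is the main obstacle; it comes out non-zero in every case, which gives the ``if'' direction and completes the proof.
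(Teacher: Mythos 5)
The paper does not prove this statement at all: it is imported wholesale from \cite{HKO}, so there is no internal proof to compare against, and your sketch has to stand on its own. Your ``only if'' direction does stand: since $G$ is $p$-locally a product of odd spheres, $\pi_{2n_i+2n_j-2}(G)$ splits as a direct sum over the factors, Theorem \ref{homotopy group of sphere} forces $n_i+n_j\equiv n_k \pmod{p-1}$ for a non-zero summand, and $p\ge n_l+1$ pins the multiple down to $1$. This is exactly the homotopy-group argument the paper itself uses for its vanishing statements. A point worth making explicit for your cofibre argument: because $n_i+n_j=n_k+p-1\ge p+1>n_l$, the degree $2(n_i+n_j)$ carries no polynomial generator of $H^*(BG;\Z/p)$, so the ``no linear term'' hypothesis (which the paper checks by hand in Propositions \ref{quasi_regular-5} and \ref{E_7-3}) is automatic in the $p$-regular range.

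The gap is in the ``if'' direction, and it is substantial. First, your claim that $[\mu_i,\mu_j]\ne 0$ \emph{precisely when} the coefficient of $y_{2n_i}y_{2n_j}$ in $\mathcal{P}^1y_{2n_k}$ is a unit is not an equivalence: a non-zero coefficient implies non-triviality (by the extension-to-$S^{2n_i}\times S^{2n_j}$ argument), but a zero coefficient gives no information, since the Whitehead product can be non-trivial and invisible to $\mathcal{P}^1$. Second, and more seriously, the ``finite check'' you defer is the entire content of the theorem: you assert with no verification that the relevant $\mathcal{P}^1$-coefficient is non-zero in every admissible case. Computing these coefficients in $H^*(BG;\Z/p)$ for the exceptional groups (via Wu formulas pulled back along classical subgroups, as in the proof of Proposition \ref{E_7-3}) is the hard part of \cite{HKO}, and that paper in fact does \emph{not} rely on Steenrod operations alone: it supplements them with the unstable $K$-theory/Chern character method (the $[Z,G]\cong[Z,F]/\mathrm{Im}\,\delta_*$ machinery recalled as Theorem \ref{HKMO} here) to handle pairs the cohomological criterion does not settle. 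As written, your argument establishes the ``only if'' direction and a correct \emph{strategy} for ``if'', but not the theorem.
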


Then we immediately get:

\begin{corollary}
\label{triviality_regular}
\begin{enumerate}
\item The Samelson product $\langle\epsilon,\lambda_l\rangle$ is non-trivial for $p=n_l+1$;
\item The Samelson product $\langle\epsilon,\lambda_i\rangle$ is trivial for any $i$ for $p>n_l+1$.
\end{enumerate}
\end{corollary}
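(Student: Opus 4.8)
The plan is to deduce the corollary directly from the theorem on the non-triviality of $\langle\lambda_i,\lambda_j\rangle$ stated just above, once $\epsilon$ has been identified with a sphere inclusion. For every exceptional Lie group the type $(n_1,\dots,n_l)$ has $n_1=2$, so in the $p$-regular decomposition the lowest sphere factor is $S^3=S^{2n_1-1}$ with inclusion $\lambda_1\colon S^3\to G$. Since this inclusion induces an isomorphism on $\pi_3$ after $p$-localization and $\epsilon$ generates $\pi_3(G)\cong\Z$, we have $\epsilon\simeq u\cdot\lambda_1$ for some $p$-local unit $u$; the bilinearity of Samelson products then gives $\langle\epsilon,\lambda_i\rangle=u\cdot\langle\lambda_1,\lambda_i\rangle$, so $\langle\epsilon,\lambda_i\rangle$ is trivial (respectively non-trivial) exactly when $\langle\lambda_1,\lambda_i\rangle$ is.

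It then remains to run the numerical criterion with $n_i=n_1=2$. For part (1), with $p=n_l+1$, the condition $n_1+n_l=n_k+p-1$ reads $2+n_l=n_k+n_l$, i.e.\ $n_k=2$, which does occur in the type (take $n_k=n_1$); hence $\langle\lambda_1,\lambda_l\rangle$, and therefore $\langle\epsilon,\lambda_l\rangle$, is non-trivial. For part (2), with $p>n_l+1$, the condition $n_1+n_i=n_k+p-1$ forces $n_k=n_i+3-p\le n_l+3-p\le 1$; since every integer appearing in the type of an exceptional Lie group is at least $2$, there is no such $n_k$ for any $i$, so each $\langle\lambda_1,\lambda_i\rangle$, and hence each $\langle\epsilon,\lambda_i\rangle$, is trivial.

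The only step that requires any care is the identification $\epsilon\simeq u\cdot\lambda_1$, which merely records that in the $p$-regular range the mod $p$ decomposition of $G$ is a product of spheres whose unique $3$-dimensional factor carries all of $\pi_3(G)$; granting this, the rest is the elementary arithmetic above and no genuine obstacle arises.
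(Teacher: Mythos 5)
Your proposal is correct and is exactly the argument the paper intends: the corollary is stated as an immediate consequence of the [HKO] criterion, obtained by identifying $\epsilon$ (up to a $p$-local unit) with the inclusion $\lambda_1\colon S^3\to G$ of the factor corresponding to $n_1=2$ and then checking when $2+n_i=n_k+p-1$ can be solved with $n_k\ge 2$. Your arithmetic for both cases matches what the paper leaves implicit.
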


Thus by combining Propositions \ref{max} and \ref{upper_bound_regular} and Corollary \ref{triviality_regular}, we obtain the following.

\begin{corollary}
\label{cor:regular}
We have
$$\gamma(G,p)=\begin{cases}p&p=n_l+1\\1&p>n_l+1.\end{cases}$$
\end{corollary}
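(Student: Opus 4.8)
\emph{Proof proposal.} The plan is to glue together the three preceding results. First observe that each $\gamma_i(G,p)$ is, by definition, the order of the Samelson product $\langle\epsilon,\lambda_i\rangle$, which (as noted in the proof of Proposition~\ref{upper_bound_regular}) is a map from a $2n$-dimensional sphere into $G$ with $n\le n_l+1\le p$, hence an element of a homotopy group annihilated by $p$ according to the computation $p\cdot\pi_{2k}(G)=0$ established there. Since $p$ is prime, this forces $\gamma_i(G,p)\in\{1,p\}$ for every $i$. By Proposition~\ref{max} we then get $\gamma(G,p)=\max_i\gamma_i(G,p)\in\{1,p\}$, and $\gamma(G,p)=p$ if and only if $\langle\epsilon,\lambda_i\rangle$ is non-trivial for some $i$.

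Next I would reconcile the two labellings in play. In the $p$-regular case each factor $B_i$ of the mod $p$ decomposition is either a point or a single sphere $S^{2n_j-1}$ with $n_j$ in the type of $G$; correspondingly each $\lambda_i\colon A_i\to G$ is either constant (so $\gamma_i(G,p)=1$) or the inclusion of a sphere factor. Hence $\gamma(G,p)=p$ precisely when $\langle\epsilon,\lambda_j\rangle\neq 0$ for at least one of the sphere inclusions $\lambda_j\colon S^{2n_j-1}\to G$, which is exactly the situation addressed by Corollary~\ref{triviality_regular}.

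Now split into the two cases. If $p>n_l+1$, Corollary~\ref{triviality_regular}(2) says every $\langle\epsilon,\lambda_j\rangle$ is trivial, so $\gamma(G,p)=1$. If $p=n_l+1$, Corollary~\ref{triviality_regular}(1) says the Samelson product $\langle\epsilon,\lambda_l\rangle$ for the top sphere $S^{2n_l-1}=S^{2p-3}$ is non-trivial, so the corresponding $\gamma_i(G,p)$ equals $p$ and thus $\gamma(G,p)\ge p$; combined with the bound $\gamma(G,p)\le p$ from the first paragraph this gives $\gamma(G,p)=p$, completing the proof.

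The argument is essentially formal once the three inputs are assembled; the only point demanding a moment's care — and the main (minor) obstacle — is the bookkeeping between the index set $\{1,\ldots,p-1\}$ of the mod $p$ decomposition and the index set $\{1,\ldots,l\}$ of the type, together with the remark that $\epsilon$ and the sphere inclusions appearing in Corollary~\ref{triviality_regular} agree up to $p$-local units, so that the orders of the relevant Samelson products are unaffected.
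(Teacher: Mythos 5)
Your proposal is correct and follows the same route as the paper, which likewise obtains the corollary by combining Proposition~\ref{max}, Proposition~\ref{upper_bound_regular} and Corollary~\ref{triviality_regular}; the paper simply states this combination without spelling out the case split. Your extra care with the indexing of the $\lambda_i$ versus the sphere inclusions is a reasonable elaboration of a point the paper leaves implicit, not a departure in method.
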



\section{The quasi-$p$-regular case}

Throughout this section, we assume that $(G,p)$ is
\begin{equation}
\label{regular}
\F_4,\E_6,\;p=5,7,11;\quad\E_7,\;p=11,13,17;\quad\E_8,\;p=11,13,17,19,23,29
\end{equation}
unless otherwise stated. This is equivalent to $G$ being a product of spheres and rank 2 H-spaces, and such a $G$ is quasi-$p$-regular. 

We first show the triviality of $\langle\epsilon,\lambda_i\rangle$ from the homotopy groups of $G$. By Theorem \ref{homotopy group of sphere}, for $n\ge 1$ and $i\le n+(p-1)^2$, we have $\pi_{2i}(S^{2n+1})=0$  unless $i\equiv n\mod(p-1)$ and $p\cdot\pi_{2i}(S^{2n+1})=0$.

Moreover, the homotopy groups of the H-spaces $B(2n-1,2n-1+2(p-1))$ in the mod $p$ decomposition of $G$ are calculated.

\begin{theorem}[Mimura and Toda \cite{MT}, Kishimoto \cite{Ki}]\label{homotopy group of Bi}
Localized at $p$, we have
\[
\pi_{2n-1+k}(B(3, 2p+1))\cong\begin{cases}
\Z/p\Z	&\text{for $k=2i(p-1)-1$, $2\leq i\leq p-1$}\\
\Z		&\text{for $k=2p-2$}\\
0		&\text{other cases for $1\leq k\leq 2p(p-1)-3$},
\end{cases}
\]
and
\[
\pi_{2n-1+k}(B(2n-1, 2n+2p-3))\cong\begin{cases}
\Z/p^2\Z	&\text{for $k=2i(p-1)-1$, $2\leq i\leq p-1$}\\
\Z/p\Z		&\text{for $k=2i(p-1)-2$, $n\leq i\leq p-1$}\\
\Z			&\text{for $k=2p-2$}\\
0			&\text{other cases for $1\leq k\leq 2p(p-1)-3$}.
\end{cases}
\]
\end{theorem}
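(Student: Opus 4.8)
The plan is to realise each of the two H-spaces as the total space of a sphere bundle over a sphere and to read off its homotopy groups from the long exact sequence, using Theorem~\ref{homotopy group of sphere} as the input on the spheres. Write $B=B(2n-1,2n+2p-3)$, so that $B(3,2p+1)$ is the case $n=2$. By the construction of the quasi-$p$-regular factors (Mimura--Toda), $B$ carries a $p$-local fibration
$$S^{2n-1}\xrightarrow{\ j\ }B\xrightarrow{\ q\ }S^{2n+2p-3}$$
whose cells ($0$, $2n-1$, $2n+2p-3$, $4n+2p-4$) account for all of $B$ (so no separate analysis of the top cell is needed), and whose connecting homomorphism $\partial\colon\pi_*(S^{2n+2p-3})\to\pi_{*-1}(S^{2n-1})$ is composition with a generator $\bar\alpha_1$ of $\pi_{2n+2p-4}(S^{2n-1})\cong\Z/p\Z$ (over $S^3$ this is Toda's element $\alpha_1(3)$).

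First I would run the homotopy long exact sequence of this fibration and substitute the values of $\pi_*(S^{2n-1})$ and $\pi_*(S^{2n+2p-3})$ from Theorem~\ref{homotopy group of sphere}: in the stated range every group is $\Z$, $\Z/p\Z$, or $0$, and the nonzero ones occur only in prescribed residues modulo $2(p-1)$. A degreewise bookkeeping then shows $\pi_{2n-1+k}(B)$ can be nonzero only for $k$ in the three families in the statement, and in each such degree $m=2n-1+k$ exactness presents $\pi_m(B)$ as an extension
$$0\to\operatorname{coker}\!\big(\partial\colon\pi_{m+1}(S^{2n+2p-3})\to\pi_m(S^{2n-1})\big)\to\pi_m(B)\to\ker\!\big(\partial\colon\pi_m(S^{2n+2p-3})\to\pi_{m-1}(S^{2n-1})\big)\to 0.$$
For $k=2p-2$ this at once gives $\pi_{2n+2p-3}(B)\cong\ker(\partial\colon\Z\to\Z/p\Z)\cong\Z$, since $\partial(\iota_{2n+2p-3})=\bar\alpha_1$ is a generator, and it also yields the vanishing claimed in all the ``other cases''.

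The remaining content is the computation of $\partial$ on the pertinent $\Z/p\Z$'s, equivalently of the composites $\bar\alpha_1\circ\alpha_j(2n+2p-3)$, together with one extension problem. For $k=2i(p-1)-1$, $2\le i\le p-1$: one checks $\operatorname{coker}=\pi_m(S^{2n-1})\cong\Z/p\Z=\langle\alpha_i(2n-1)\rangle$ and, once $\bar\alpha_1\circ\alpha_{i-1}(2n+2p-3)=0$ is known, $\ker=\pi_m(S^{2n+2p-3})\cong\Z/p\Z=\langle\alpha_{i-1}(2n+2p-3)\rangle$. When $n\ge 3$ one then shows the extension is \emph{nontrivial}, so $\pi_m(B)\cong\Z/p^2\Z$: a lift $\widetilde\alpha_{i-1}\in\pi_m(B)$ of $\alpha_{i-1}(2n+2p-3)$ satisfies $p\,\widetilde\alpha_{i-1}=j_*(u\,\alpha_i(2n-1))$ for a unit $u$, the class $u\,\alpha_i(2n-1)$ being represented by the secondary composition $\langle\bar\alpha_1,\,p\iota,\,\alpha_{i-1}(2n+2p-3)\rangle$, which stably equals $\pm\alpha_i$ with vanishing indeterminacy. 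The same generating bracket also kills the would-be $\Z/p\Z$'s in degrees $k=2i(p-1)-2$ with $i<n$, leaving exactly the range $n\le i\le p-1$.

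The case $n=2$, that is $B(3,2p+1)$, must be handled separately, and this is precisely where the answers differ. Here one shows instead that $\bar\alpha_1=\alpha_1(3)$ composed with $\alpha_{i-1}(2p+1)$ is a \emph{nonzero} element of $\pi_{2i(p-1)+2}(S^3)$ for $2\le i\le p-1$ (an unstable composition in $\pi_*(S^3)$ due to Toda); consequently $\partial$ is onto the $\Z/p\Z$'s produced by the second case of Theorem~\ref{homotopy group of sphere}, which collapses the answer in degree $k=2i(p-1)-1$ to $\Z/p\Z$ and annihilates every group in degrees $k=2i(p-1)-2$. The main obstacle is exactly this collection of unstable and secondary composition computations in $\pi_*(S^{2n-1})$ and $\pi_*(S^3)$: they do not follow formally from the long exact sequence, and one needs Toda's composition methods (or an $e$-invariant/$K$-theory detection of the element of order $p^2$) — this is the substance imported from \cite{MT} and \cite{Ki}.
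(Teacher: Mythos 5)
The paper offers no proof of this statement---it is imported verbatim from Mimura--Toda \cite{MT} and Kishimoto \cite{Ki}---and your sketch reconstructs essentially the argument of those sources: the $p$-local fibration $S^{2n-1}\to B\to S^{2n+2p-3}$ with connecting element $\alpha_1$, the homotopy exact sequence fed by Theorem~\ref{homotopy group of sphere}, and Toda brackets and unstable compositions to settle the boundary maps and the extension. Your bookkeeping of the three families and of the $n=2$ versus $n\ge 3$ dichotomy is correct, and you rightly identify that the composition facts ($\alpha_1(3)\circ\alpha_{i-1}\neq 0$ unstably on $S^3$, its vanishing on $S^{2n-1}$ for $n\ge 3$, and $\langle\alpha_1,p\iota,\alpha_{i-1}\rangle\ni\pm\alpha_i$ with trivial indeterminacy) are precisely the nontrivial content carried by the cited references rather than by the exact sequence itself.
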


For $n\ge 1$ and $i\le n+(p-1)^2$, $\pi_{2i}(B(2n+1,2n+1+2(p-1)))=0$ unless $i\equiv n\mod(p-1)$, and $p\cdot\pi_{2i}(B(3,2p+1))=0$ for $i\le 1+(p-1)^2$. By Theorems \ref{homotopy group of sphere} and \ref{homotopy group of Bi}, we obtain the following table for all possibly non-trivial Samelson products $\langle\epsilon,\lambda_i\rangle$ in $G$:

\renewcommand{\arraystretch}{1.2}
\begin{table}[H] 
\centering
\begin{tabular}{l|lllll} 
&$p=5$&$p=7$&$p=11$&$p=13$&p=19\\\hline
$\F_4$&$i=3$&$i=5$\\
$\E_6$&$i=3$&$i=2,5$\\
$\E_7$&&&$i=3,7$&$i=5$\\
$\E_8$&&&$i=9$&$i=5,11$&$i=5,11$
\end{tabular}
\end{table}

We will determine the order of these Samelson products by using the technique developed in \cite{HKMO} which we briefly recall here. The technique in \cite{HKMO} is for computing the Samelson products in the $p$-localized exceptional Lie group $G$ for $(G,p)$ in \eqref{torsion_free} except for $(G,p)=(\G_2,3),(\E_7,5)$. We only consider the case that $G=\E_6,\E_7,\E_8$ since the $\F_4$ case can be deduced from the $\E_6$ case. Consider the 27-, 56- and 248-dimensional irreducible representations of $G=\E_6,\E_7,\E_8$, respectively. Let $\rho\colon G\to\SU(\infty)$ be the composite of these representations and the inclusions into $\SU(\infty)$. Then we have a homotopy fibration sequence
$$\Omega\SU(\infty)\xrightarrow{\delta}F\to G\xrightarrow{\rho}\SU(\infty)$$
in which all maps are loop maps, where $F$ is the homotopy fiber of the map $\rho$. For any finite CW complex $Z$, we obtain an exact sequence of groups
$$\widetilde{K}^0(Z)\cong\widetilde{K}^{-2}(Z)\xrightarrow{\delta_*}[Z,F]\to[Z,G]\to\widetilde{K}^{-1}(Z).$$
In particular, if $Z$ has only even dimensional cells, then $\widetilde{K}^{-1}(Z)=0$ and we get
\begin{equation}
\label{[Z,G]}
[Z,G]\cong[Z,F]/\mathrm{Im}\,\delta_*.
\end{equation}
In \cite{HKMO}, the right hand side of~(\ref{[Z,G]}) is identified by using the cohomology of $Z$ under some conditions and we recall it in the special case $Z=S^3\wedge A_i$. Let $\SU(\infty)\simeq C_1\times\cdots\times C_{p-1}$ be the mod $p$ decomposition of $\SU(\infty)$ where $C_i$ is of type $(i+1,i+p,\cdots,i+1+k(p-1),\cdots)$. 

\begin{theorem}
\label{HKMO}
Let $(G,p)$ be as in \eqref{torsion_free} except for $G=\G_2$ and $(G,p)=(\E_7,5)$. For given $1\le i\le p-1$, we suppose the following conditions:
\begin{enumerate}
\item for $1\le j\le p-1$ with $j\equiv i+2\mod(p-1)$, 
$$B_j=B(2j+1,2j+1+2(p-1),\ldots,2j+1+2r(p-1));$$
\item the composite $B_j\xrightarrow{\rm incl}G\xrightarrow{\rho}\SU(\infty)\xrightarrow{\rm proj}C_j$ is surjective in cohomology;
\item $\dim A_i\le 2j-3+2(r+2)(p-1)$.
\end{enumerate}
Then for $n=j+(r+1)(p-1)$, there is an injective homomorphism 
$$\Phi\colon[S^3\wedge A_i,F]\to H^{2n}(S^3\wedge A_i;\Z_{(p)})\oplus H^{2n+2(p-1)}(S^3\wedge A_i;\Z_{(p)})$$
such that
$$\Phi(\delta_*(\xi))=(n!\ch_n(\xi),(n+p-1)!\ch_{n+p-1}(\xi))$$
for $\xi\in\widetilde{K}(S^3\wedge A_i)$, where $\ch_k$ is the $2k$-dimensional part of the Chern character.
\end{theorem}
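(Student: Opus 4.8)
The plan is to build $\Phi$ from the integral cohomology of the factor of $F$ relevant to $S^3\wedge A_i$ in the mod $p$ decomposition, and to evaluate $\Phi\circ\delta_*$ by identifying that factor's fiber inclusion with a cohomology suspension and then invoking the classical comparison between $H^*(\Omega\SU(\infty);\Z)$ and the Chern character. First I would locate the relevant factor. Using the mod $p$ decompositions $G\simeq B_1\times\cdots\times B_{p-1}$ and $\SU(\infty)\simeq C_1\times\cdots\times C_{p-1}$ and the fact that all maps in the fibration are loop maps, one checks by degree considerations on the decompositions (with Theorem~\ref{homotopy group of sphere}) that the off-diagonal components of $\rho$ are null through the range of dimensions that matters, so that $F$ splits there as a product whose $k$-th factor is the homotopy fiber $F_k$ of $\rho_k\colon B_k\to C_k$. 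Since $A_i$ has cells in degrees $\equiv 2i+1\bmod 2(p-1)$, the complex $S^3\wedge A_i$ has cells in degrees $\equiv 2(i+2)\bmod 2(p-1)$, while $F_k$ has its cohomology generators in degrees $\equiv 2k\bmod 2(p-1)$; one then checks that only $F_j$ with $j\equiv i+2\bmod(p-1)$ can receive an essential map from $S^3\wedge A_i$ in the relevant range, so $[S^3\wedge A_i,F]\cong[S^3\wedge A_i,F_j]$.

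Next I would determine the structure of $F_j$ and define $\Phi$. Under hypothesis~(1), $B_j$ is the truncation of $C_j$ to its first $r+1$ exterior generators, and under hypothesis~(2), $\rho_j$ is surjective in cohomology with kernel generated by the remaining exterior generators (a regular sequence). The Serre (or Eilenberg--Moore) spectral sequence of $F_j\to B_j\xrightarrow{\rho_j}C_j$ then shows that $F_j$ is $(2n-1)$-connected with $H^{2n}(F_j;\Z_{(p)})$ and $H^{2n+2(p-1)}(F_j;\Z_{(p)})$ infinite cyclic, generated by classes $u_{2n}$ and $u_{2n+2(p-1)}$ that transgress to the first two kernel generators of $H^*(C_j)$, in degrees $2n+1$ and $2n+2p-1$, and with no cohomology in degrees congruent to $2n$ modulo $2(p-1)$ strictly between $2n$ and $2n+2(p-1)$ (here $n=j+(r+1)(p-1)$; products of the $u$'s lie above the range since $2n>2(p-1)$). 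As $F_j$ is a loop space that is $(2n-1)$-connected, $u_{2n}$ and $u_{2n+2(p-1)}$ are primitive, so $\Phi(f):=(f^*u_{2n},f^*u_{2n+2(p-1)})$ is a homomorphism on $[S^3\wedge A_i,F_j]$. By hypothesis~(3), $\dim(S^3\wedge A_i)=\dim A_i+3\le 2n+2(p-1)$, so each $f$ factors through the quotient of the $(2n+2(p-1))$-skeleton of $S^3\wedge A_i$ by its $(2n-1)$-skeleton, a complex with cells only in degrees $2n$ and $2n+2(p-1)$; obstruction theory together with Theorem~\ref{homotopy group of sphere} in the relevant stems then shows that $f\simeq *$ once $f^*u_{2n}=f^*u_{2n+2(p-1)}=0$, so $\Phi$ is injective.

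Finally I would evaluate $\Phi\circ\delta_*$. The restriction of $\delta$ to the relevant factor is the fiber inclusion $\Omega C_j\to F_j$ of $q\colon F_j\to B_j$, which is a loop map; since $u_{2n}$ and $u_{2n+2(p-1)}$ transgress to the exterior generators $x_{2n+1},x_{2n+2p-1}$ of $H^*(C_j;\Z_{(p)})$ in the fibration $F_j\to B_j\to C_j$, the relation between transgression, fiber restriction and the cohomology suspension $\sigma^{-1}$ gives $\delta^*u_{2n}=\sigma^{-1}(x_{2n+1})$ and $\delta^*u_{2n+2(p-1)}=\sigma^{-1}(x_{2n+2p-1})$, which are the primitive polynomial generators of $H^*(\Omega C_j;\Z_{(p)})\subset H^*(\Omega\SU(\infty);\Z_{(p)})$. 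By the classical relation between the integral cohomology of $\Omega\SU(\infty)$ and the Chern character (the primitive integral generator of $H^{2k}(\Omega\SU(\infty);\Z)$ being $k!\,\ch_k$ modulo decomposables), $\sigma^{-1}(x_{2k+1})$ equals $k!\,\ch_k$ modulo decomposables. Since $S^3\wedge A_i$ is a suspension, cup products in its cohomology vanish, so for $\xi\in\widetilde K(S^3\wedge A_i)=[S^3\wedge A_i,\Omega\SU(\infty)]$ we obtain $\Phi(\delta_*\xi)=(\xi^*\delta^*u_{2n},\xi^*\delta^*u_{2n+2(p-1)})=(n!\,\ch_n(\xi),(n+p-1)!\,\ch_{n+p-1}(\xi))$, as claimed.

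The step I expect to be the main obstacle is the last one: deducing the exact coefficients $n!$ and $(n+p-1)!$, as opposed to merely that $\Phi\circ\delta_*$ agrees with the Chern character up to $p$-local units. This requires careful bookkeeping of primitively normalised integral cohomology generators through Bott periodicity and through the comparison of the fibrations $\Omega C_j\to F_j\to B_j$ and $F_j\to B_j\to C_j$, verifying that neither the passage to the mod $p$ factor $C_j$ nor the transgression introduces a spurious $p$-adic unit. Secondary technical points are the injectivity step, where Theorem~\ref{homotopy group of sphere} must be used to show that no nonzero class lies in the kernel of $\Phi$, and the factor-localisation step, namely the claim that the off-diagonal part of $\rho$ and the factors $F_k$ with $k\not\equiv i+2$ really do contribute nothing over $S^3\wedge A_i$ within the range permitted by hypothesis~(3).
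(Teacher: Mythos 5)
Your proposal follows essentially the same route as the paper's sketch: restrict attention to the factor $F_j$ (the homotopy fiber of $B_j\to C_j$), read off from the hypotheses that its $\Z_{(p)}$-cohomology in the relevant range is concentrated in degrees $2n$ and $2n+2(p-1)$, define $\Phi$ by pulling back those two generators, and compute $\Phi\circ\delta_*$ via the transgression to (suspensions of) Chern classes together with the classical identity expressing the primitive generator of $H^{2k}(\Omega\SU(\infty);\Z)$ as $k!\,\ch_k$. The details you supply (the splitting of $[S^3\wedge A_i,F]$ onto the $j$-th factor, the skeletal/obstruction argument for injectivity, and the bookkeeping of the factorial coefficients) are exactly the content the paper delegates to \cite{HKMO} and \cite{HK}, so the proposal is a correct fleshing-out of the same argument.
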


\begin{proof} 
To better understand the statement of the theorem, we sketch the proof in~\cite{HKMO}. 
Let~$\rho_j$ be the composition $B_j\overset{incl}{\longrightarrow}G\overset{\rho}{\longrightarrow}SU(\infty)\overset{proj}{\longrightarrow}C_j$ and let $F_j$ be its homotopy fiber. By hypothesis, $\tilde{H}^{i}(F_j;\Z_{(p)})$ is zero for $i\leq 2j+2(r+2)(p-1)$, except for $\tilde{H}^{2n}(F_j;\Z_{(p)})$ and $\tilde{H}^{2n+2(p-1)}(F_j;\Z_{(p)})$ which are $\Z_{(p)}$. Moreover, the generators $a_{2n}$ and $a_{2n+2(p-1)}$ of $\tilde{H}^{2n}(F_j;\Z_{(p)})$ and $\tilde{H}^{2n+2(p-1)}(F_j;\Z_{(p)})$ transgress to the suspensions of Chern class $c_{n+1}$ and $c_{n+p-1}$ modulo some decomposables respectively. Define 
\[\Phi:[S^3\wedge A_i, F_j]\to H^{2n}(S^3\wedge A_i;\Z_{(p)})\oplus H^{2n+2(p-1)}(S^3\wedge A_i;\Z_{(p)})\] 
by
\[
\Phi(\xi)=\xi^*(a_{2n})\oplus\xi^*(a_{2n+2(p-1)}).
\]
Then calculating as in \cite{HK} implies the asserted statement.
\end{proof}

This theorem can be applied to calculate Samelson products in $G$. Let $Z_1$ and $Z_2$ be spaces and let $\theta_1:Z_1\to G$ and $\theta_2:Z_2\to G$ be maps. Consider the Samelson product $\langle\theta_1,\theta_2\rangle$ and put $Z=Z_1\wedge Z_2$. Since $\SU(\infty)$ is homotopy commutative, $\langle\theta_1,\theta_2\rangle$ lifts to a map $\tilde{\theta}\colon Z\to F$. Then if $Z$ is a CW-complex consisting only of even dimensional cells, the order of $\tilde{\theta}$ in the coset $[Z,F]/\mathrm{Im}\,\delta_*$ is equal to the order of $\langle\theta_1,\theta_2\rangle$. In \cite{HKMO}, a convenient lift $\widetilde{\theta}$ is constructed such that, if we have the injection $\Phi$ of Theorem \ref{HKMO}, then $\Phi(\widetilde{\theta})$ is calculated from the Chern classes of $\rho$.

We now apply Theorem \ref{HKMO} to our case.

\begin{proposition}
\label{quasi-regular}
For $(G,p,i)=(\E_6,5,3),(\E_8,11,9)$, there is an injection 
$$\Phi\colon[S^3\wedge A,F]\to H^{2i+4+2(p-1)}(S^3\wedge A;\Z_{(p)})\oplus H^{2i+4+4(p-1)}(S^3\wedge A;\Z_{(p)})\cong(\Z_{(p)})^2$$
satisfying the following, where $A=A(2i+1+2(p-1),2i+1+4(p-1))$:
\begin{enumerate}
\item for $(G,p,i)=(\E_6,5,3)$, $\mathrm{Im}\,\Phi\circ\delta_*$ is generated by $(3\cdot 5,2^{-3}\cdot 5\cdot 13)$ and $(0,5^2)$, and a lift $\tilde{\theta}$ of $\langle\epsilon,\lambda_3\rangle$ can be chosen so that $\Phi(\tilde{\theta})=(2^{-3}\cdot 3^2,-2^{-6}\cdot 3^{-1}\cdot 71)$;
\item for $(G,p,i)=(\E_8,11,9)$, is generated by $(3\cdot 11\cdot 17\cdot 41,5^4\cdot 11\cdot 13\cdot 31\cdot 61)$ and $(0,11^2)$, and a lift $\tilde{\theta}$ of $\langle\epsilon,\lambda_3\rangle$ can be chosen so that $\Phi(\tilde{\theta})=(3\cdot 7\cdot 17\cdot 89,2^{-2}\cdot 3\cdot 5\cdot 2207\cdot 17977)$.
\end{enumerate}
\end{proposition}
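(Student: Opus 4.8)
The plan is to apply Theorem~\ref{HKMO} with the specific data $(G,p,i)=(\E_6,5,3)$ and $(\E_8,11,9)$ taken from the table of possibly non-trivial Samelson products, and then to carry out the two bookkeeping tasks that the theorem reduces the problem to: computing the image of $\Phi\circ\delta_*$ and computing $\Phi$ of a convenient lift of $\langle\epsilon,\lambda_i\rangle$. First I would verify hypotheses (1)--(3) of Theorem~\ref{HKMO} in each case. For $(\E_6,5,3)$: the relevant index is $j\equiv i+2\equiv 5\equiv 1\bmod 4$, so $j=1$, and from the mod $5$ decomposition table $B_1=B(3,11)=B(3,2p+1)$, which has the form required in~(1) with $r=1$; one then checks that $B_1\to\E_6\to\SU(\infty)\to C_1$ is surjective in cohomology using the known Chern classes of the $27$-dimensional representation, and that $\dim A_3=\dim A(13,21)=21\le 2j-3+2(r+2)(p-1)=-1+24=23$. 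For $(\E_8,11,9)$: here $j\equiv 9+2\equiv 11\equiv 1\bmod 10$, so $j=1$, $B_1=B(3,23)=B(3,2p+1)$ with $r=1$, and $\dim A_9=\dim A(21,41)=41\le -1+2(r+2)(p-1)=-1+40=39$ --- this last inequality requires care, so checking the exact dimension of $A_9$ and the exact bound is one place where the setup must be done precisely. With the hypotheses in hand, Theorem~\ref{HKMO} gives the injection $\Phi$ onto the stated target, with $n=j+(r+1)(p-1)=1+2(p-1)$, so $2n=2i+4+2(p-1)$ after substituting $i$, matching the claimed cohomological degrees.

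The second step is to compute $\mathrm{Im}\,\Phi\circ\delta_*$. By Theorem~\ref{HKMO}, $\Phi(\delta_*(\xi))=(n!\,\ch_n(\xi),(n+p-1)!\,\ch_{n+p-1}(\xi))$ for $\xi\in\widetilde K(S^3\wedge A)$, so I need a generating set for $\widetilde K(S^3\wedge A)$ --- since $S^3\wedge A$ has only even cells (in degrees $2n$ and $2n+2(p-1)$ after the suspension shift), $\widetilde K(S^3\wedge A)\cong\Z_{(p)}^2$, and the image is the lattice spanned by the Chern character images of two natural generators. Concretely one takes the generator $\xi$ dual to the bottom cell and the generator dual to the top cell; the bottom one contributes $(n!,*)$ while the top one contributes $(0,(n+p-1)!)$ up to units, and the cross term $*$ is governed by the Steenrod operation / attaching map of $A=A(2m-1,2m+2p-3)$, which is the relation $\mathcal{P}^1$. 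This is a direct, if somewhat delicate, Chern character computation; plugging in $n=9$ for $(\E_6,5,3)$ gives $9!=2^7\cdot3^4\cdot5\cdot7$ and $(n+p-1)!=13!$, and one reads off, after dividing by the appropriate $p$-adic units, the generators $(3\cdot5,2^{-3}\cdot5\cdot13)$ and $(0,5^2)$; similarly $n=21$ for $(\E_8,11,9)$ produces $21!$ and $31!$ and the stated generators $(3\cdot11\cdot17\cdot41,5^4\cdot11\cdot13\cdot31\cdot61)$ and $(0,11^2)$.

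The third step --- and the one I expect to be the main obstacle --- is computing $\Phi(\widetilde\theta)$ for a convenient lift $\widetilde\theta$ of the Samelson product $\langle\epsilon,\lambda_i\rangle$. Following the recipe referenced from \cite{HKMO}, the lift $\widetilde\theta\colon S^3\wedge A\to F$ is chosen so that $\Phi(\widetilde\theta)$ is expressed through the Chern classes of $\rho$ restricted to $S^3$ and to $A_i$; concretely, the Samelson product corresponds under the Chern character to an explicit polynomial in the Chern classes of the defining representation (the $27$-dimensional one for $\E_6$, the $248$-dimensional adjoint one for $\E_8$), evaluated on the relevant cohomology classes of $S^3$ and $A_i\subset G$. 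One must identify which Chern classes of these representations pair non-trivially with the generators of $H^*(A_i)$ --- this uses the known description of $H^*(\E_6)$ and $H^*(\E_8)$ and the restriction of the Chern classes, and it is here that the precise numerical coefficients $(2^{-3}\cdot3^2,-2^{-6}\cdot3^{-1}\cdot71)$ and $(3\cdot7\cdot17\cdot89,2^{-2}\cdot3\cdot5\cdot2207\cdot17977)$ are produced. The difficulty is twofold: first, assembling the Chern character of $\rho$ in the right degrees requires knowing enough of the Weyl-group-invariant polynomial expressions (or the published Chern class formulas for these representations), and second, the arithmetic is genuinely heavy --- these are factorials of $21$ and $31$ and products of several primes, and a sign or a factor of $2$ error anywhere invalidates the final order computation. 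I would organize the computation by working $p$-locally throughout (so that the non-$p$ unit factors like $2,3,7,\dots$ are carried along only to the extent needed), and cross-check the final pair $\Phi(\widetilde\theta)$ against the lattice $\mathrm{Im}\,\Phi\circ\delta_*$ to confirm that the coset has the expected order, which is what feeds into the subsequent determination of $\gamma_i(G,p)$.
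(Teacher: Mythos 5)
Your overall strategy is exactly the paper's: verify the hypotheses of Theorem~\ref{HKMO} for $j=1$, $B_1=B(3,2p+1)$, $r=1$, $n=1+2(p-1)$; compute $\mathrm{Im}\,\Phi\circ\delta_*$ as the lattice spanned by the Chern character images of two generators of $\widetilde K(S^3\wedge A)$; and evaluate $\Phi(\tilde\theta)$ from the Chern classes of $\rho$ following \cite{HKMO}. The one concrete error is your identification of $A$. By the statement itself, $A=A(2i+1+2(p-1),2i+1+4(p-1))$, which is $A(15,23)$ for $(\E_6,5,3)$ and $A(39,59)$ for $(\E_8,11,9)$, not $A(13,21)$ and $A(21,41)$; equivalently, $S^3\wedge A$ has cells in dimensions $2n=18,\,2n+2(p-1)=26$ (resp.\ $42,62$), matching the target of $\Phi$. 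With the correct dimensions, condition (3) of Theorem~\ref{HKMO} reads $23\le 23$ and $59\le 59$, so the tension you flag for $\E_8$ ($41\le 39$ failing) is an artifact of the miscomputation and both hypotheses hold (tightly). On the second step, the paper makes your ``two natural generators'' explicit: $\xi=\beta\wedge\bar\rho$ where $\bar\rho$ is the adjoint of $\rho\circ\lambda_i$ (whose bottom Chern character component is a $p$-local unit, so it plays the role of your bottom-cell generator, and whose cross term is read off directly from the computed $\ch(\xi)$ in \cite{HKMO} rather than from the attaching map of $A$), and $\eta$ the pinch map to the top cell composed with a generator of $\pi_{2n+2(p-1)}(B\U(\infty))$; applying $\Phi\circ\delta_*$ then gives the stated generators after discarding prime-to-$p$ units. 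Your third step is as in the paper, which likewise defers the heavy Chern class arithmetic to \cite{HKMO}.
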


\begin{proof}
The Chern classes of $\rho$ are calculated in \cite{HKMO}, so we can easily check that the conditions of Theorem \ref{HKMO} are satisfied in our case. We consider the case $(G,p,i)=(\E_6,5,3)$. Let $\bar{\rho}\colon\Sigma A\to B\SU(\infty)$ be the adjoint of $\rho\circ\lambda_i$ and put $\xi=\beta\wedge\bar{\rho}\in\widetilde{K}(S^3\wedge A)$, where $\beta$ is a generator of $\widetilde{K}(S^2)$. Let $\eta\in\widetilde{K}(S^3\wedge A)$ be the composite of the pinch map $S^3\wedge A\to S^{26}$ and a generator of $\pi_{26}(B\U(\infty))\cong\Z_{(5)}$. Then by \cite{HKMO}, we have
$$\ch(\xi)=\frac{5}{8!}u_{18}+\frac{5}{32\cdot 11!}u_{26},\quad\ch(\eta)=u_{26}.$$
Thus we get $\mathrm{Im}\,\Phi\circ\delta_*$ as desired. As in \cite{HKMO}, we can calculate $\Phi(\widetilde{\theta})$ by using the Chern classes of $\rho$ to get the desired result. The case of $(G,p,i)=(\E_8,11,9)$ is proved similarly.
\end{proof}

\begin{corollary}
\label{quasi_regular-1}
We have $\gamma_3(\E_6,5)=5^2$ and $\gamma_9(\E_8,11)=11^2$.
\end{corollary}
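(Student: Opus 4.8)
The plan is to apply the recipe recalled just before Proposition~\ref{quasi-regular}. With $A=A(2i+1+2(p-1),2i+1+4(p-1))$ the space $Z=S^3\wedge A$ has cells only in the even dimensions $2i+4+2(p-1)$ and $2i+4+4(p-1)$, so the order of $\langle\epsilon,\lambda_i\rangle$ equals the order of the chosen lift $\tilde\theta$ in the group $[Z,F]/\mathrm{Im}\,\delta_*$. Since the homomorphism $\Phi$ of Proposition~\ref{quasi-regular} is injective, this order is the least integer $m\ge 1$ such that $m\,\Phi(\tilde\theta)$ lies in $\mathrm{Im}\,(\Phi\circ\delta_*)$, and Proposition~\ref{quasi-regular} makes $\Phi(\tilde\theta)$ and the generators of $\mathrm{Im}\,(\Phi\circ\delta_*)$ completely explicit inside $(\Z_{(p)})^2$. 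Thus the whole problem becomes a $2$-dimensional linear algebra computation over $\Z_{(p)}$.

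I would carry this out as follows. Write $\Phi(\tilde\theta)=(x_1,x_2)$ and the two generators of $\mathrm{Im}\,(\Phi\circ\delta_*)$ as $w_1=(y_1,y_2)$ and $w_2=(0,p^2)$; in both cases $x_1,x_2$ are units in $\Z_{(p)}$ while $y_1,y_2$ each have $p$-adic valuation exactly $1$. Solving $p^s\Phi(\tilde\theta)=a w_1+b w_2$ with $a,b\in\Z_{(p)}$, the first coordinate forces $a=p^s x_1/y_1=p^{s-1}u$ with $u$ a unit in $\Z_{(p)}$, so $a\in\Z_{(p)}$ exactly when $s\ge 1$. Substituting into the second coordinate gives $b\,p^2=p^{s-1}(p x_2-u y_2)=p^s X$, where $X=x_2-u\,y_2/p\in\Z_{(p)}$ is a difference of two units of $\Z_{(p)}$. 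Hence $b\in\Z_{(p)}$ exactly when $p^{s-2}X\in\Z_{(p)}$, and it follows that $s=2$ is minimal, so the order of $\langle\epsilon,\lambda_i\rangle$ equals $p^2$, precisely when $X$ is a unit in $\Z_{(p)}$, i.e.\ $X\not\equiv 0\bmod p$.

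So everything reduces to one congruence. For $(G,p,i)=(\E_6,5,3)$ the computation yields $X=-2^{-4}\cdot 3^{-1}\cdot 47$ (using $71+9\cdot 13=188=2^2\cdot 47$), which is a unit in $\Z_{(5)}$. For $(G,p,i)=(\E_8,11,9)$ one similarly reduces to $X=2^{-2}\cdot 3\cdot 5\cdot 2207\cdot 17977-41^{-1}\cdot 7\cdot 89\cdot 5^4\cdot 13\cdot 31\cdot 61$, and a short reduction modulo $11$ gives $X\not\equiv 0$; in both cases $s=2$, so $\langle\epsilon,\lambda_i\rangle$ has order $p^2$. Since $\gamma_i(G,p)$ is by definition this order, we conclude $\gamma_3(\E_6,5)=5^2$ and $\gamma_9(\E_8,11)=11^2$. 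The only real obstacle is keeping the $p$-adic bookkeeping straight — in particular checking that the $\E_8$ combination is nonzero modulo $11$ — since the rest is formal once Proposition~\ref{quasi-regular} is available.
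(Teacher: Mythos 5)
Your proposal is correct and is exactly the deduction the paper leaves implicit: the corollary follows from Proposition~\ref{quasi-regular} by computing the order of $\tilde\theta$ in $[S^3\wedge A,F]/\mathrm{Im}\,\delta_*$, which by the discussion after Theorem~\ref{HKMO} equals the order of $\langle\epsilon,\lambda_i\rangle$ since $S^3\wedge A$ has only even cells. Your $p$-adic bookkeeping checks out in both cases (e.g.\ $71+9\cdot 13=188=2^2\cdot 47$ gives a unit in $\Z_{(5)}$, and the $\E_8$ combination is $\equiv 1\bmod 11$), so this matches the paper's (unwritten) argument.
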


The following are proved similarly to Proposition \ref{quasi-regular}.

\begin{proposition}
For $(G,p,i)=(\E_6,7,5),(\E_7,11,7),(\E_7,13,5),(\E_8,13,11),(\E_8,19,11)$, there is an injection 
$$\Phi\colon[S^3\wedge A,F]\to H^{2i+2p+2}(S^3\wedge A;\Z_{(p)})\cong\Z_{(p)}$$
such that $\mathrm{Im}\,\Phi\circ\delta_*$ is generated by $1\in\Z_{(p)}$, where $A=A(2i+1,2i+1+2(p-1))$. Then we have $[S^3\wedge A,F]=0$.
\end{proposition}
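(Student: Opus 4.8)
The plan is to run the argument of Proposition~\ref{quasi-regular} in the degenerate case in which the codomain of $\Phi$ collapses to a single $\Z_{(p)}$. Here the relevant homology generating subspace is $A=A(2i+1,2i+1+2(p-1))$, attached to the factor $B_i$ itself, so $S^3\wedge A$ has just two cells, in dimensions $2i+4$ and $2i+2p+2$. First I would apply Theorem~\ref{HKMO} with the given index~$i$: for each of the five pairs one reads off from the mod~$p$ decomposition tabulated above that the integer $j$ with $j\equiv i+2\pmod{p-1}$ picks out a factor $B_j$ which is either a sphere $S^{2j+1}$ (so $r=0$) or a rank~$2$ H-space $B(2j+1,2j+1+2(p-1))$ (so $r=1$), that the composite $B_j\to\SU(\infty)\to C_j$ is onto in cohomology, and that $\dim A=2i+2p-1$ satisfies condition~(3); all three are verified from the Chern classes of $\rho$ computed in \cite{HKMO}. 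The integer $n=j+(r+1)(p-1)$ produced by the theorem equals $i+p+1$, so $2n=2i+2p+2$ is the dimension of the top cell of $S^3\wedge A$, while $H^{2n+2(p-1)}(S^3\wedge A;\Z_{(p)})=0$ since $2n+2(p-1)>\dim(S^3\wedge A)$; hence $\Phi$ lands in $H^{2i+2p+2}(S^3\wedge A;\Z_{(p)})\cong\Z_{(p)}$ and the formula of Theorem~\ref{HKMO} reduces to $\Phi(\delta_*(\zeta))=(i+p+1)!\,\ch_{i+p+1}(\zeta)$.

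Next I would compute $\mathrm{Im}\,\Phi\circ\delta_*$ exactly as in Proposition~\ref{quasi-regular}. Let $\bar\rho\colon\Sigma A\to B\SU(\infty)$ be the adjoint of $\rho\circ\lambda_i$, put $\xi=\beta\wedge\bar\rho\in\widetilde K(S^3\wedge A)$ with $\beta$ a generator of $\widetilde K(S^2)$, and let $\eta\in\widetilde K(S^3\wedge A)$ be the pinch onto the top cell followed by a $K$-theory generator of that sphere, so that $\ch(\eta)$ is the top generator and $\Phi(\delta_*(\eta))=(i+p+1)!$. Using the Chern classes of $\rho$ from \cite{HKMO} one finds $\ch(\bar\rho)$ in the top degree $2i+2p$ of $\Sigma A$ to be $N/(i+p)!$ times the generator, for a suitable integer $N$; smashing with $\beta$ then gives $\ch_{i+p+1}(\xi)=N/(i+p)!$ times the top generator of $S^3\wedge A$, so that $\Phi(\delta_*(\xi))=(i+p+1)\,N$ times the generator of $\Z_{(p)}$. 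Since $p\nmid i+p+1$ in each of the five cases, it remains only to verify that $p\nmid N$, which I would do pair by pair from the tables of \cite{HKMO}. Granting this, $\mathrm{Im}\,\Phi\circ\delta_*$ is generated by $1\in\Z_{(p)}$; in fact $\xi$ alone suffices, $\eta$ contributing the redundant multiple $(i+p+1)!$.

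Finally, since $\Phi$ is injective and $\mathrm{Im}\,\Phi\circ\delta_*=\Z_{(p)}$, the exact sequence \eqref{[Z,G]} with $Z=S^3\wedge A$ shows that the induced map $[S^3\wedge A,G]\cong[S^3\wedge A,F]/\mathrm{Im}\,\delta_*\to\Z_{(p)}/\mathrm{Im}(\Phi\circ\delta_*)=0$ is injective, whence $\delta_*$ is onto $[S^3\wedge A,F]$, the group $[S^3\wedge A,G]$ vanishes, and therefore $\langle\epsilon,\lambda_i\rangle$ is null and $\gamma_i(G,p)=1$ for all five pairs. I expect the main obstacle to be the arithmetic of the previous paragraph: one must extract the integer $N$ — a power sum, equivalently a Chern number, of the $27$-, $56$-, or $248$-dimensional representation in a degree of size about $2p^2$ — from the data of \cite{HKMO} and confirm that it is prime to $p$, all while keeping correct track of the competing factors of $p$ in $(i+p+1)!$ and $(i+p)!$. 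This is routine but delicate, and the computations already carried out for $(\E_6,5,3)$ and $(\E_8,11,9)$ in Proposition~\ref{quasi-regular} provide a template.
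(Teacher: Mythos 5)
Your proposal is correct and follows essentially the same route as the paper, whose entire proof of this proposition is the remark that it is ``proved similarly to Proposition~\ref{quasi-regular}'': you apply Theorem~\ref{HKMO} with the right $j$ and $r$ for each pair, observe that $n=i+p+1$ so the second summand of the target vanishes for dimensional reasons, and compute $\mathrm{Im}\,\Phi\circ\delta_*$ from $\ch(\beta\wedge\bar\rho)$, concluding that the quotient $[S^3\wedge A,F]/\mathrm{Im}\,\delta_*$ (hence the Samelson product) vanishes. The one step you leave unverified --- that the relevant Chern character numerator $N$ is prime to $p$ in each of the five cases --- is precisely the computation the paper also delegates to the tables of \cite{HKMO}, so your write-up matches the paper's argument in both structure and level of detail.
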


\begin{corollary}
\label{quasi_regular-2}
$\gamma_5(\E_6,7)=\gamma_7(\E_7,11)=\gamma_5(\E_7,13)=\gamma_{11}(\E_8,13)=\gamma_{11}(\E_8,19)=1$.
\end{corollary}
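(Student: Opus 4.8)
The plan is to deduce Corollary~\ref{quasi_regular-2} from the Proposition immediately preceding it, together with the reduction around~\eqref{[Z,G]}. Recall that $\gamma_i(G,p)$ is by definition the order of the Samelson product $\langle\epsilon,\lambda_i\rangle\colon S^3\wedge A_i\to G$. First I would check, from the table of mod $p$ decompositions, that for each of the five triples $(G,p,i)$ in the statement the factor $B_i$ is the rank $2$ H-space $B(2i+1,2i+1+2(p-1))$, so that its homology generating subspace $A_i$ equals the two-cell complex $A=A(2i+1,2i+1+2(p-1))$ appearing in the Proposition; consequently $Z=S^3\wedge A$ has cells only in the even dimensions $2i+4$ and $2i+2p+2$, and the isomorphism~\eqref{[Z,G]} applies to it.

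The Samelson product $\langle\epsilon,\lambda_i\rangle$ lifts to a map $\widetilde{\theta}\colon S^3\wedge A\to F$ because $\SU(\infty)$ is homotopy commutative, and, as recalled after Theorem~\ref{HKMO}, since $S^3\wedge A$ has only even-dimensional cells the order of $\langle\epsilon,\lambda_i\rangle$ equals the order of $\widetilde{\theta}$ in $[S^3\wedge A,F]/\mathrm{Im}\,\delta_*$. By the Proposition, $\Phi\colon[S^3\wedge A,F]\to\Z_{(p)}$ is injective and $\mathrm{Im}\,\Phi\circ\delta_*$ is all of $\Z_{(p)}$, so $\Z_{(p)}=\Phi(\mathrm{Im}\,\delta_*)\subseteq\Phi([S^3\wedge A,F])\subseteq\Z_{(p)}$ and the injectivity of $\Phi$ forces $\mathrm{Im}\,\delta_*=[S^3\wedge A,F]$; hence the quotient $[S^3\wedge A,F]/\mathrm{Im}\,\delta_*$ is trivial, $\langle\epsilon,\lambda_i\rangle$ is null-homotopic, and $\gamma_i(G,p)=1$ for each of the five triples, which is the assertion of the corollary.

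The real content lies in the Proposition itself, which is proved exactly as Proposition~\ref{quasi-regular} was. One verifies the hypotheses of Theorem~\ref{HKMO} from the Chern classes of $\rho$ computed in~\cite{HKMO}: in each of the five cases the factor $B_j$ with $j\equiv i+2\mod(p-1)$ is either a rank $2$ H-space (so $r=1$ and $j=i+2-(p-1)$) or a sphere (so $r=0$ and $j=i+2$), and in both situations $n=j+(r+1)(p-1)=i+p+1$, so that $2n$ is exactly the top-cell dimension $2i+2p+2$ of $S^3\wedge A$; thus the second summand $H^{2n+2(p-1)}(S^3\wedge A;\Z_{(p)})$ in Theorem~\ref{HKMO} vanishes, which is why $\Phi$ lands in the single group $H^{2i+2p+2}(S^3\wedge A;\Z_{(p)})\cong\Z_{(p)}$. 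It then remains to compute $\mathrm{Im}\,\Phi\circ\delta_*$, which by Theorem~\ref{HKMO} is the subgroup of $\Z_{(p)}$ generated by the elements $n!\,\ch_n(\xi)$ for $\xi\in\widetilde{K}(S^3\wedge A)$. As $S^3\wedge A$ has two cells, these are generated by the images of $\eta$, the pullback along the pinch map $S^3\wedge A\to S^{2n}$ of a generator of $\pi_{2n}(B\U(\infty))\cong\Z_{(p)}$, and of $\xi=\beta\wedge\bar{\rho}$ in the notation of Proposition~\ref{quasi-regular}, where $\bar{\rho}$ is the adjoint of $\rho\circ\lambda_i$ and $\beta$ a generator of $\widetilde{K}(S^2)$; the image of $\eta$ is $n!$, which has $\nu_p(n!)=1$ since $p\le n<2p$, while the computation shows that the image of $\xi$ (or, where needed, of a suitable $\Z_{(p)}$-combination of $\xi$ and $\eta$) is a $p$-local unit, whence $\mathrm{Im}\,\Phi\circ\delta_*=\Z_{(p)}$. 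The step I expect to be the main obstacle is precisely this last computation — tracking the numerators and denominators of the degree-$2n$ part of the Chern character of $\beta\wedge\bar{\rho}$ via the explicit Chern classes of the $27$-, $56$- and $248$-dimensional representations — since everything else in both the Proposition and the corollary is formal.
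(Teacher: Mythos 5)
Your argument is correct and follows the paper's route exactly: the corollary is an immediate consequence of the preceding Proposition via the identification of the order of $\langle\epsilon,\lambda_i\rangle$ with the order of $\tilde{\theta}$ in $[S^3\wedge A,F]/\mathrm{Im}\,\delta_*$, and your sketch of the Proposition itself (verifying the hypotheses of Theorem~\ref{HKMO}, computing $n=i+p+1$ so that only the top-cell summand survives, and checking that $\xi=\beta\wedge\bar{\rho}$ maps to a $p$-local unit) is the same computation the paper defers to the method of Proposition~\ref{quasi-regular}. One small notational caution: with the paper's convention $\nu_p(k)=p^r$ one has $\nu_p(n!)=p$ (not $1$) for $p\le n<2p$, though your intended meaning --- that $n!$ carries exactly one factor of $p$, so the unit must come from $\xi$ --- is right and is what the argument uses.
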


\begin{proposition}
For $(G,p,i)=(\E_8,13,5)$, there is an injection 
$$\Phi\colon[S^3\wedge A(35,59),F]\to H^{62}(S^3\wedge A(35,59);\Z_{(13)})\cong\Z_{(13)}$$
such that $\mathrm{Im}\,\Phi\circ\delta_*$ is generated by $13\in\Z_{(13)}$ and a lift $\tilde{\theta}$ of $\langle\epsilon,\lambda_5\rangle$ can be chosen such that $\Phi(\tilde{\theta})=2^{-2}\cdot 3\cdot 5\cdot 2207\cdot 17977\in\Z_{(13)}$.
\end{proposition}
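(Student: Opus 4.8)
The plan is to run the same argument as in the proof of Proposition~\ref{quasi-regular}: apply Theorem~\ref{HKMO} to $(G,p)=(\E_8,13)$ with $i=5$, feeding in the Chern classes of the $248$-dimensional representation $\rho$ recorded in~\cite{HKMO}.

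First I would fix the data of Theorem~\ref{HKMO}. From the table of mod~$p$ decompositions, the $13$-local decomposition of $\E_8$ is $B(3,27)\times B(15,39)\times B(23,47)\times B(35,59)$, so $A_5=A(35,59)$ is a homology generating space of $B(35,59)=B(2\cdot 18-1,\,2\cdot 18-1+2(p-1))$. The unique $1\le j\le p-1$ with $j\equiv i+2=7\pmod{p-1}$ is $j=7$, and $B_7=B(15,39)=B(2\cdot 7+1,\,2\cdot 7+1+2(p-1))$, so in the notation of Theorem~\ref{HKMO} we have $r=1$ and $n=j+(r+1)(p-1)=7+24=31$. Condition~(1) is exactly this identity, condition~(2)---surjectivity in cohomology of $B_7\to\E_8\xrightarrow{\rho}\SU(\infty)\to C_7$---follows from the Chern classes of $\rho$ in~\cite{HKMO}, and condition~(3) holds since $\dim A(35,59)=59\le 2j-3+2(r+2)(p-1)=11+72=83$. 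Because $S^3\wedge A(35,59)$ has cells only in dimensions $38$ and $62$, the group $H^{2n+2(p-1)}(S^3\wedge A(35,59);\Z_{(13)})=H^{86}$ vanishes, so the target of $\Phi$ collapses to $H^{62}(S^3\wedge A(35,59);\Z_{(13)})\cong\Z_{(13)}$ and the formula of Theorem~\ref{HKMO} becomes $\Phi(\delta_*(\xi))=31!\,\ch_{31}(\xi)$.

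Next I would compute $\mathrm{Im}\,\Phi\circ\delta_*$, following the proof of Proposition~\ref{quasi-regular}. Let $\bar{\rho}\colon\Sigma A(35,59)\to B\SU(\infty)$ be the adjoint of $\rho\circ\lambda_5$, put $\xi=\beta\wedge\bar{\rho}\in\widetilde{K}(S^3\wedge A(35,59))$ with $\beta$ a generator of $\widetilde{K}(S^2)$, and let $\eta\in\widetilde{K}(S^3\wedge A(35,59))$ be the composite of the pinch map onto the top cell $S^3\wedge A(35,59)\to S^{62}$ with a generator of $\pi_{62}(B\U(\infty))\cong\Z_{(13)}$. Reading the degree-$62$ part of the Chern character of $\rho$ off~\cite{HKMO} expresses $\ch_{31}(\xi)$ as an explicit rational multiple of the class $u_{62}$ generating $H^{62}(S^3\wedge A(35,59);\Z_{(13)})$, while $\ch_{31}(\eta)=u_{62}$. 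Since $31!$ is divisible by $13^2$ but not $13^3$, the element $\Phi(\delta_*(\eta))=31!\,u_{62}$ has $13$-adic valuation $2$, whereas the coefficient of $u_{62}$ in $\ch_{31}(\xi)$ contributes exactly one factor $13^{-1}$ (just as $\frac{5}{32\cdot 11!}$ does in the $\E_6$ case), so $\Phi(\delta_*(\xi))=31!\,\ch_{31}(\xi)$ has $13$-adic valuation $1$. Exactly as in the proof of Proposition~\ref{quasi-regular}, $\xi$ and $\eta$ determine $\mathrm{Im}\,\Phi\circ\delta_*$, which is therefore $13\,\Z_{(13)}$.

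Finally I would compute $\Phi(\tilde{\theta})$. Using the convenient lift $\tilde{\theta}\colon S^3\wedge A(35,59)\to F$ of $\langle\epsilon,\lambda_5\rangle$ constructed in~\cite{HKMO} (recalled after Theorem~\ref{HKMO}), for which $\Phi(\tilde{\theta})$ is given by the Chern classes of $\rho$, substituting the $\E_8$ Chern-class data from~\cite{HKMO} and simplifying yields $\Phi(\tilde{\theta})=2^{-2}\cdot 3\cdot 5\cdot 2207\cdot 17977$. I expect the main obstacle to be this last step together with the degree-$62$ Chern-character computation above: one must correctly extract the $62$-dimensional components of the Chern character of the $248$-dimensional representation of $\E_8$ and of the chosen lift, and then carry the exact rational arithmetic---in particular the $13$-adic valuations---through without slips. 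Conceptually nothing goes beyond Proposition~\ref{quasi-regular}; the entire difficulty is the large but tabulated bookkeeping, since the needed $\E_8$ Chern classes are already computed in~\cite{HKMO}.
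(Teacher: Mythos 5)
Your proposal is correct and follows exactly the route the paper intends: the paper gives no separate proof for this proposition, covering it by the remark that these cases ``are proved similarly to Proposition \ref{quasi-regular},'' and your identification of $j=7$, $r=1$, $n=31$, the collapse of the target to $H^{62}$ since $H^{86}(S^3\wedge A(35,59);\Z_{(13)})=0$, and the computation of $\mathrm{Im}\,\Phi\circ\delta_*$ from $\xi=\beta\wedge\bar\rho$ and the top-cell class $\eta$ using the Chern character data of \cite{HKMO} is precisely that argument. The only caveat, present at the same level of implicitness in the paper's own treatment of Proposition \ref{quasi-regular}, is that concluding $\mathrm{Im}\,\Phi\circ\delta_*=13\Z_{(13)}$ from $\xi$ and $\eta$ alone requires that they generate $\widetilde{K}(S^3\wedge A(35,59))$, i.e.\ that the $u_{38}$-coefficient of $\ch(\xi)$ is a $13$-adic unit.
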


\begin{corollary}
\label{quasi_regular-3}
$\gamma_5(\E_8,13)=13$.
\end{corollary}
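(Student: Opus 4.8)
The plan is to deduce the corollary directly from the preceding Proposition together with the general mechanism recalled after Theorem~\ref{HKMO}. By definition $\gamma_5(\E_8,13)$ is the order of the Samelson product $\langle\epsilon,\lambda_5\rangle$, where for $\E_8$ at the prime $13$ the homology generating subspace indexed by $i=5$ is $A_5=A(35,59)$; indeed $35\equiv 59\equiv 11\pmod{2(p-1)}$ with $2(p-1)=24$, so the relevant congruence class is $2i+1=11$, i.e.\ $i=5$, matching the factor $B(35,59)$ of the mod $13$ decomposition. First I would note that $A(35,59)$ has cells only in the odd dimensions $35$ and $59$, so the space $Z:=S^3\wedge A(35,59)$ has cells only in the even dimensions $38$ and $62$. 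Hence the identification $[Z,G]\cong[Z,F]/\mathrm{Im}\,\delta_*$ is available, and the order of $\langle\epsilon,\lambda_5\rangle$ equals the order, in the quotient group $[Z,F]/\mathrm{Im}\,\delta_*$, of the lift $\tilde\theta\colon Z\to F$ provided by the Proposition (such a lift exists because $\SU(\infty)$ is homotopy commutative).

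Then I would transport this order computation into $\Z_{(13)}$ using the injection $\Phi$ furnished by the Proposition. Because $\Phi$ is an injective homomorphism of abelian groups and $\mathrm{Im}(\Phi\circ\delta_*)=13\,\Z_{(13)}$, it induces an injection
$$[Z,F]/\mathrm{Im}\,\delta_*\hookrightarrow\Z_{(13)}/13\,\Z_{(13)}\cong\Z/13\Z,$$
so the order of $\tilde\theta$ coincides with the order of the residue class of $\Phi(\tilde\theta)$ in $\Z/13\Z$. The Proposition records $\Phi(\tilde\theta)=2^{-2}\cdot 3\cdot 5\cdot 2207\cdot 17977$, and each factor is prime to $13$ — a one-line reduction gives $2207\equiv 10$ and $17977\equiv 11$ modulo $13$, while $2,3,5$ are visibly units of $\Z_{(13)}$ — so $\Phi(\tilde\theta)$ is a unit and its residue in $\Z/13\Z$ is nonzero. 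As $13$ is prime, a nonzero element of $\Z/13\Z$ has order $13$; therefore $\tilde\theta$, and with it $\langle\epsilon,\lambda_5\rangle$, has order exactly $13$, which is to say $\gamma_5(\E_8,13)=13$.

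I do not expect any genuine obstacle here, since the substantive content — checking the hypotheses of Theorem~\ref{HKMO} for $(\E_8,13,5)$ and extracting both $\mathrm{Im}(\Phi\circ\delta_*)$ and the explicit value $\Phi(\tilde\theta)$ from the Chern classes of $\rho$ — is already packaged in the Proposition. The only points demanding attention are bookkeeping: matching the index $i=5$ with the factor $A(35,59)$, confirming that $S^3\wedge A(35,59)$ has only even-dimensional cells so that passing to $[Z,F]/\mathrm{Im}\,\delta_*$ and comparing orders is legitimate, and verifying that the integer $\Phi(\tilde\theta)$ is coprime to $13$ — which is precisely why the Proposition is stated with its exact value rather than merely a divisibility.
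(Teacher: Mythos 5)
Your argument is correct and is exactly the deduction the paper intends (the paper leaves this corollary proofless as an immediate consequence of the preceding proposition, via the mechanism recalled after Theorem~\ref{HKMO}): the injection $\Phi$ descends to an injection of $[Z,F]/\mathrm{Im}\,\delta_*$ into $\Z_{(13)}/13\Z_{(13)}$, and since $\Phi(\tilde\theta)=2^{-2}\cdot3\cdot5\cdot2207\cdot17977$ is a unit in $\Z_{(13)}$, the class of $\tilde\theta$, hence $\langle\epsilon,\lambda_5\rangle$, has order exactly $13$. Your bookkeeping (the identification $A_5=A(35,59)$, the even-cell condition on $S^3\wedge A(35,59)$, and the residues $2207\equiv10$, $17977\equiv11 \pmod{13}$) all checks out.
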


\begin{proposition}
For $(G,p,i)=(\E_8,19,5)$, there is an injection 
$$\Phi\colon\pi_{50}(F)\to H^{50}(S^{50};\Z_{(19)})\cong\Z_{(19)}$$
such that $\mathrm{Im}\,\Phi\circ\delta_*$ is generated by $19\in\Z_{(19)}$ and a lift $\tilde{\theta}$ of $\langle\epsilon,\lambda_i\rangle$ can be chosen such that $\Phi(\tilde{\theta})=-2\cdot 3^2\cdot 5\cdot 11\cdot 2861$.
\end{proposition}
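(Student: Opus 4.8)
The plan is to apply Theorem~\ref{HKMO} to $(G,p,i)=(\E_8,19,5)$, just as in the proof of Proposition~\ref{quasi-regular}, the new feature being that the homology generating subspace $A_5=S^{47}$ is a single sphere. First I would check the three hypotheses of Theorem~\ref{HKMO}. One has $S^3\wedge A_5=S^{50}$, with $\lambda_5\colon S^{47}\to\E_8$ the inclusion of the sphere factor; the only $j$ with $1\le j\le 18$ and $j\equiv i+2\equiv 7\pmod{18}$ is $j=7$, and $B_7=S^{15}=B(2\cdot 7+1)$, so condition (1) holds with $r=0$; condition (2), the cohomology surjectivity of $S^{15}\xrightarrow{\rm incl}\E_8\xrightarrow{\rho}\SU(\infty)\xrightarrow{\rm proj}C_7$, follows from the Chern classes of the $248$-dimensional representation recorded in \cite{HKMO}; and condition (3) reads $\dim A_5=47\le 2j-3+2(r+2)(p-1)=83$. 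With $n=j+(r+1)(p-1)=25$ we then obtain an injection
$$\Phi\colon\pi_{50}(F)\to H^{50}(S^{50};\Z_{(19)})\oplus H^{86}(S^{50};\Z_{(19)})=H^{50}(S^{50};\Z_{(19)})\cong\Z_{(19)},$$
the second summand vanishing because $2n+2(p-1)=86>50$.

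Next I would compute $\mathrm{Im}\,\Phi\circ\delta_*$. The group $\widetilde{K}(S^{50})\cong\Z_{(19)}$ is generated by a class $\xi$ with $\ch_{25}(\xi)=u_{50}$, a generator of $H^{50}(S^{50};\Z_{(19)})$ (the Chern character being an integral isomorphism on an even sphere). Theorem~\ref{HKMO} then gives $\Phi(\delta_*(\xi))=25!\,\ch_{25}(\xi)=25!\,u_{50}$, the second component lying in $H^{86}(S^{50};\Z_{(19)})=0$. Since $\nu_{19}(25!)=19$, the submodule $\mathrm{Im}\,\Phi\circ\delta_*$ is generated by $19\in\Z_{(19)}$, as claimed.

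It remains to pick a lift $\tilde\theta\colon S^{50}\to F$ of $\langle\epsilon,\lambda_5\rangle$ and to evaluate $\Phi(\tilde\theta)$. Such a lift exists because $\SU(\infty)$ is homotopy commutative: by naturality of Samelson products $\rho\circ\langle\epsilon,\lambda_5\rangle=\langle\rho\circ\epsilon,\rho\circ\lambda_5\rangle$, which is null, so $\langle\epsilon,\lambda_5\rangle$ lifts through $F\to\E_8$. The lift is well defined modulo $\mathrm{Im}\,\delta_*$, and under $[S^{50},\E_8]\cong[S^{50},F]/\mathrm{Im}\,\delta_*$ its order in the coset equals the order of $\langle\epsilon,\lambda_5\rangle$. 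Following \cite{HKMO} I would take the convenient lift whose image under $\Phi$ is computed from the Chern character of $\rho$: with $\bar\rho\colon S^{48}\to B\SU(\infty)$ the adjoint of $\rho\circ\lambda_5$ and $\xi'=\beta\wedge\bar\rho\in\widetilde{K}(S^{50})$, where $\beta$ generates $\widetilde{K}(S^2)$, the value $\Phi(\tilde\theta)$ is read off from $\ch_{25}(\xi')$ via the transgression formula of Theorem~\ref{HKMO}, exactly as in the $(\E_6,5,3)$ case of Proposition~\ref{quasi-regular}. Inserting the degree-$50$ part of the Chern character of the $248$-dimensional representation of $\E_8$ from \cite{HKMO} and reducing $19$-locally then yields $\Phi(\tilde\theta)=-2\cdot 3^2\cdot 5\cdot 11\cdot 2861$.

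The main obstacle is this last computation: one must extract the degree-$50$ part of the Chern character of the adjoint representation of $\E_8$ and carry the resulting rational coefficient through the lift construction without error. It is the heaviest arithmetic of the quasi-$p$-regular section, but it is routine given the Chern class tables of \cite{HKMO}, and it is in fact cleaner than the corresponding step in Proposition~\ref{quasi-regular} because $S^3\wedge A_5=S^{50}$ has a single cell. Finally, since $-2\cdot 3^2\cdot 5\cdot 11\cdot 2861$ is a $19$-local unit whereas $\mathrm{Im}\,\Phi\circ\delta_*=19\Z_{(19)}$, the injectivity of $\Phi$ will give $\gamma_5(\E_8,19)=19$.
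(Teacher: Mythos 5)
Your proposal is correct and follows essentially the same route as the paper, which offers no separate argument for this case beyond ``proved similarly to Proposition~\ref{quasi-regular}'': one verifies the hypotheses of Theorem~\ref{HKMO} with $j=7$, $r=0$, hence $n=25$ and target $H^{50}(S^{50};\Z_{(19)})$ (the $H^{86}$ summand vanishing), computes $\mathrm{Im}\,\Phi\circ\delta_*$ from $\nu_{19}(25!)=19$, and evaluates $\Phi(\tilde\theta)$ from the Chern class data of $\rho$ in \cite{HKMO}. Your hypothesis checks and the $19$-adic valuation are right, and your deferral of the final numerical evaluation of $\Phi(\tilde\theta)$ to the Chern character computations of \cite{HKMO} mirrors exactly what the paper does.
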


\begin{corollary}
\label{quasi_regular-4}
$\gamma_5(\E_8,19)=19$.
\end{corollary}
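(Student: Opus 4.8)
The plan is to obtain the corollary from the Proposition immediately preceding it together with a short $19$-adic computation; the Proposition itself is established exactly as Proposition~\ref{quasi-regular}. First I would set up the reduction. The homology generating subspace $A_5$ is the sphere $S^{47}$, so $S^3\wedge A_5=S^{50}$, a complex with cells only in even dimensions, which matches the domain of $\Phi$ in the Proposition. As recalled after Theorem~\ref{HKMO}, the Samelson product $\langle\epsilon,\lambda_5\rangle$ lifts to a map $\tilde\theta\colon S^{50}\to F$, and by~(\ref{[Z,G]}) its order in $[S^{50},\E_8]$, which by definition is $\gamma_5(\E_8,19)$, equals the order of $\tilde\theta$ in the quotient group $\pi_{50}(F)/\mathrm{Im}\,\delta_*$.

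Next I would establish the Proposition. The Chern classes of the composite $\rho\colon\E_8\to\SU(\infty)$ of the $248$-dimensional representation are recorded in~\cite{HKMO}; from these one checks that the hypotheses of Theorem~\ref{HKMO} hold for $(\E_8,19,5)$, yielding the injection $\Phi\colon\pi_{50}(F)\to H^{50}(S^{50};\Z_{(19)})\cong\Z_{(19)}$ with $\mathrm{Im}(\Phi\circ\delta_*)=19\,\Z_{(19)}$. Then, just as in Proposition~\ref{quasi-regular}, one computes $\ch(\xi)$ on the generator $\xi\in\widetilde{K}(S^{50})$ built from $\rho\circ\lambda_5$ and assembles $\Phi(\tilde\theta)$ out of the Chern-character data of $\rho$, arriving at $\Phi(\tilde\theta)=-2\cdot3^2\cdot5\cdot11\cdot2861$.

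It then remains to read off the order. Since $\Phi$ is injective, it induces an isomorphism $\pi_{50}(F)/\mathrm{Im}\,\delta_*\cong\Phi(\pi_{50}(F))/19\,\Z_{(19)}$, so $\gamma_5(\E_8,19)$ is the order of the class of $\Phi(\tilde\theta)$ in the latter. A congruence check modulo $19$ gives $2861\equiv 11$ and $2\cdot3^2\cdot5\cdot11=990\equiv 2$, hence $\Phi(\tilde\theta)\equiv -22\equiv -3\pmod{19}$; in particular $\Phi(\tilde\theta)$ is a unit in $\Z_{(19)}$. For a unit $u$ the least positive integer $m$ with $mu\in 19\,\Z_{(19)}$ is $m=19$, so the class of $\Phi(\tilde\theta)$ has order $19$, whence $\gamma_5(\E_8,19)=19$.

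The only genuinely delicate step is inside the Proposition: verifying the hypotheses of Theorem~\ref{HKMO} for $(\E_8,19,5)$ and, above all, correctly pinning down both $\mathrm{Im}(\Phi\circ\delta_*)$ and the value of $\Phi(\tilde\theta)$ from the Chern classes of $\rho$, where an arithmetic slip would be costly. By contrast, once the number $-2\cdot3^2\cdot5\cdot11\cdot2861$ is in hand the deduction of the corollary is immediate, as this number is prime to $19$. I would therefore concentrate the verification effort on reproducing the Chern-character computation of~\cite{HKMO} in the normalization required by Theorem~\ref{HKMO}, and treat the final mod-$19$ reduction as routine.
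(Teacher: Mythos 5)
Your argument is correct and follows the paper's route exactly: the corollary is an immediate consequence of the preceding proposition, since $\Phi$ is injective, $\mathrm{Im}\,\Phi\circ\delta_*=19\,\Z_{(19)}$, and $\Phi(\tilde{\theta})=-2\cdot 3^2\cdot 5\cdot 11\cdot 2861\equiv -3\pmod{19}$ is a unit in $\Z_{(19)}$, so the class of $\tilde{\theta}$ in $\pi_{50}(F)/\mathrm{Im}\,\delta_*\cong[S^{50},\E_8]$ has order exactly $19$. Your mod-$19$ reduction and the identification $S^3\wedge A_5=S^{50}$ (from $B_5=S^{47}$ in the mod $19$ decomposition of $\E_8$) both check out.
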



\begin{proposition}
\label{quasi_regular-5}
$\gamma_2(\E_6,7)=7$ and $\gamma_3(\E_7,11)=11$.
\end{proposition}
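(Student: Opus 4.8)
The plan is to treat both cases uniformly by the method of Proposition~\ref{quasi-regular}, that is, via Theorem~\ref{HKMO}. The key simplification here is that the homology generating subspace involved is a single sphere: $A_2=S^{17}$ for $(G,p)=(\E_6,7)$ and $A_3=S^{27}$ for $(G,p)=(\E_7,11)$, so that $S^3\wedge A_i$ is itself a sphere $S^{2n}$, with $2n=20$ for $\E_6$ and $2n=30$ for $\E_7$. First I would apply Theorem~\ref{HKMO} with $i=2$, respectively $i=3$: the index $j$ with $j\equiv i+2\bmod(p-1)$ is then $j=4$, respectively $j=5$, and in each case $B_j$ is a sphere factor of the mod $p$ decomposition ($B_4=S^9$ for $\E_6$ and $B_5=S^{11}$ for $\E_7$), so $r=0$ and hypothesis~(1) holds automatically. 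Hypothesis~(3) is the inequality $\dim A_i\le 2j-3+2(r+2)(p-1)$, which here reads $17\le 29$ and $27\le 47$, and hypothesis~(2), surjectivity of $B_j\to G\to\SU(\infty)\to C_j$ in cohomology, follows from the Chern classes of $\rho$ computed in \cite{HKMO}: the $5$th Chern class of the $27$-dimensional representation of $\E_6$, respectively the $6$th Chern class of the $56$-dimensional representation of $\E_7$, has primitive part a $p$-local unit multiple of the relevant cohomology generator. With $n=j+(r+1)(p-1)$, so $n=10$ for $\E_6$ and $n=15$ for $\E_7$, Theorem~\ref{HKMO} then supplies an injection
$$\Phi\colon[S^{2n},F]\to H^{2n}(S^{2n};\Z_{(p)})\oplus H^{2n+2(p-1)}(S^{2n};\Z_{(p)})\cong\Z_{(p)},$$
the second summand vanishing for dimensional reasons, with $\Phi(\delta_*(\xi))=n!\,\ch_n(\xi)$ for $\xi\in\widetilde{K}(S^{2n})$.

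Next I would pin down $\mathrm{Im}\,\delta_*$ and the ambient group. Since $\widetilde{K}(S^{2n})\cong\Z$ is generated by a class whose degree-$2n$ Chern character component generates $H^{2n}(S^{2n};\Z)$, we obtain $\mathrm{Im}(\Phi\circ\delta_*)=\nu_p(n!)\,\Z_{(p)}$, and as $\nu_7(10!)=7$ and $\nu_{11}(15!)=11$ this is $p\,\Z_{(p)}$ in both cases. On the other hand, reading off the homotopy group of $G$ in degree $2n$ from the mod $p$ decomposition together with Theorems~\ref{homotopy group of sphere} and~\ref{homotopy group of Bi} gives $\pi_{20}(\E_6)\cong\Z/7$, carried by the $S^9$ factor, and $\pi_{30}(\E_7)\cong\Z/11$, carried by the $S^{11}$ factor. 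Since $[S^{2n},F]/\mathrm{Im}\,\delta_*\cong\pi_{2n}(G)\cong\Z/p$ by~\eqref{[Z,G]}, the injectivity of $\Phi$ forces $\Phi$ to be an isomorphism onto $\Z_{(p)}$ taking $\mathrm{Im}\,\delta_*$ to $p\,\Z_{(p)}$. Consequently, writing $\tilde\theta\colon S^{2n}\to F$ for a lift of $\langle\epsilon,\lambda_i\rangle$ as constructed in \cite{HKMO}, the Samelson product $\langle\epsilon,\lambda_i\rangle$ has order $p$ precisely when $\Phi(\tilde\theta)$ is a unit of $\Z_{(p)}$, and is null-homotopic otherwise.

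The last step is to compute $\Phi(\tilde\theta)$, which I would carry out as in the proof of Proposition~\ref{quasi-regular}: let $\bar\rho\colon\Sigma A_i\to B\SU(\infty)$ be the adjoint of $\rho\circ\lambda_i$ and set $\xi=\beta\wedge\bar\rho\in\widetilde{K}(S^3\wedge A_i)$ with $\beta$ a generator of $\widetilde{K}(S^2)$; then $\ch(\xi)$, hence $\Phi(\tilde\theta)$, is determined by the Chern classes of the $27$-dimensional representation of $\E_6$, respectively the $56$-dimensional representation of $\E_7$, recorded in \cite{HKMO}, and one checks that the resulting rational number lies in $\Z_{(p)}^{\times}$. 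Granting this, $\gamma_2(\E_6,7)=7$ and $\gamma_3(\E_7,11)=11$ follow from the previous paragraph (and $\F_4$, whose mod $7$ decomposition has no $B_2$ factor, is unaffected, consistent with the reduction of the $\F_4$ case to $\E_6$). I expect the only genuine work to lie in this final numerical verification: one must extract the degree-$2n$ coefficient of the Chern character of the bundle associated to $\rho$, a combinatorial expression in the exponents of $\E_6$, respectively $\E_7$, and confirm that --- in contrast with the $i=5$ case for $(\E_6,7)$ and the $i=7$ case for $(\E_7,11)$ in Corollary~\ref{quasi_regular-2} --- no factor of $p$ occurs. This is precisely the kind of computation carried out for the other quasi-$p$-regular pairs above, which is why the proposition can be asserted as proved similarly to Proposition~\ref{quasi-regular}.
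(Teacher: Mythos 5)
Your structural reduction is sound: for $(\E_6,7,2)$ and $(\E_7,11,3)$ the hypotheses of Theorem~\ref{HKMO} do hold with $j=4,r=0,n=10$ (resp.\ $j=5,r=0,n=15$), $S^3\wedge A_i$ is the sphere $S^{20}$ (resp.\ $S^{30}$), $\nu_7(10!)=7$ and $\nu_{11}(15!)=11$ give $\mathrm{Im}\,\Phi\circ\delta_*=p\Z_{(p)}$, and $\pi_{2n}(G)\cong\Z/p$ carried by $S^9$ (resp.\ $S^{11}$), so everything correctly reduces to whether $\Phi(\tilde\theta)$ is a $p$-local unit. The problem is that this last step is the entire content of the proposition, and you dispose of it by asserting that ``one checks'' the unit condition from the Chern classes ``recorded in \cite{HKMO}.'' The paper explicitly states that this is exactly where the method breaks down: computing $\Phi(\tilde\theta)$ here ``needs more data from the Chern characters than that obtained in \cite{HKMO},'' which is why these two cases are singled out and treated by a different argument. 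As written, your proof leaves the one genuinely nontrivial claim resting on a citation that does not contain the needed input, so the non-triviality of $\langle\epsilon,\lambda_2\rangle$ (resp.\ $\langle\epsilon,\lambda_3\rangle$) is not established.

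What the paper actually does for the lower bound is a Steenrod-power/Whitehead-product argument on $BG$. For $(\E_6,7)$: by \cite{HKMO}, $\mathcal{P}^1x_{10}\in H^*(B\E_6;\Z/7)=\Z/7[x_4,x_{10},x_{12},x_{16},x_{18},x_{24}]$ has no linear term and contains the monomial $x_4x_{18}$. If $\langle\epsilon,\lambda_2\rangle$ were trivial, the Whitehead product $[\bar\epsilon,\bar\lambda_2]$ would vanish, so $\bar\epsilon\vee\bar\lambda_2$ would extend to a map $\mu\colon S^4\times S^{18}\to B\E_6$ with $\mu^*(x_4)=u_4\otimes 1$, $\mu^*(x_{18})=1\otimes u_{18}$ and $\mu^*(x_{10})=0$; then $0=\mathcal{P}^1\mu^*(x_{10})=\mu^*(\mathcal{P}^1x_{10})=u_4\otimes u_{18}\neq 0$, a contradiction. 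The upper bound is obtained as in your argument (the Samelson product factors through $S^9$ and $7\cdot\pi_{20}(S^9)=0$). The $(\E_7,11)$ case is handled the same way using $\mathcal{P}^2x_{12}\ni -3x_4x_{28}$ in $H^*(B\E_7;\Z/11)$. If you want to keep your K-theoretic route, you would need to actually produce the degree-$20$ (resp.\ degree-$30$) Chern character coefficients of the relevant classes from the $27$- and $56$-dimensional representations and verify the unit condition yourself, rather than citing \cite{HKMO} for them.
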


\begin{proof}
We can prove this proposition in the same manner as above, but it needs more data from the Chern characters than that obtained in \cite{HKMO}. Therefore we employ another method also used in \cite{HKMO}. 
Consider the case $(G,p)=(\E_6,7)$. Recall that the mod 7 cohomology of $B\E_6$ is given by $\Z/7[x_4,x_{10},x_{12},x_{16},x_{18},x_{24}]$ for $|x_i|=i$. Let $\bar{\epsilon}\colon S^4\to B\E_6$ and $\bar{\lambda}_2\colon S^{18}\to B\E_6$ be the adjoints of $\epsilon\colon S^3\to\E_6$ and $\lambda_2\colon S^{17}\to\E_6$, respectively. Since $\bar{\epsilon}$ is the inclusion of the bottom cell and $\lambda_2$ is the inclusion of the direct product factor, we have $\bar{\epsilon}^*(x_4)=u_4$ and $\bar{\lambda}_2^*(x_{18})=u_{18}$ for a generator $u_i$ of $H^i(S^i;\Z/7)$. By \cite{HKMO}, we have that $\mathcal{P}^1x_{10}$ has no linear term and includes $x_4x_{18}$. Suppose that the Samelson product $\langle\epsilon,\lambda_2\rangle$ is trivial. Then, taking adjoints, the Whitehead product $[\bar{\epsilon},\bar{\lambda}_2]$ is trivial too, so there is a homotopy commutative diagram
$$\xymatrix{S^4\vee S^{18}\ar[r]^(.58){\bar{\epsilon}\vee\bar{\lambda}_2}\ar[d]^{\rm incl}&B\E_6\ar@{=}[d]\\
S^4\times S^{18}\ar[r]^(.58)\mu&B\E_6.}$$ 
Let $u_i$ be a generator of $H^i(S^i;\Z/7)$. It follows that $\mu^*(x_4)=u_4\otimes 1$ and $\mu^*(x_{18})=u_{18}$, so $$0=\mathcal{P}^1\mu^*(x_{10})=\mu^*(\mathcal{P}^1x_{10})=\mu^*(x_4x_{18})=u_4\otimes u_{18}\ne 0,$$ a contradiction. Thus the Samelson product $\langle\epsilon,\lambda_2\rangle$ is non-trivial. On the other hand, by Theorems \ref{homotopy group of sphere} and \ref{homotopy group of Bi}, $\langle\epsilon,\lambda_2\rangle$ factors through $S^9\subset\E_6$ and $7\cdot\pi_{20}(S^9)_{(11)}=0$. Therefore we obtain that $\gamma_2(\E_6,7)=7$ as desired. The case of $\gamma_3(\E_7,11)$ is similarly verified using the fact from~\cite{HKMO} that $\mathcal{P}^2x_{12}$ has no linear term and includes $-3x_4x_{28}$, and the fact that the mod 11 cohomology of $B\E_7$ is $\Z/11[x_4,x_{12},x_{16},x_{20},x_{24},x_{28},x_{36}]$ for $|x_i|=i$. 
\end{proof}

%
%
%

Since $\F_4$ is a retract of $\E_6$, we can deduce the following from Propositions \ref{quasi_regular-1} and
\ref{quasi_regular-2}. 

\begin{corollary}
\label{F_4}
Possible non-trivial $\langle\epsilon,\lambda_i\rangle$ in $\F_4$ at the prime $p$ are the cases $(p,i)=(5,2),(7,5)$ and $\gamma_3(\F_4,5)=5^2,\gamma_5(\F_4,7)=1$.
\end{corollary}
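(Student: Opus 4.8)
The plan is to transfer the $\E_6$ computations of Corollaries~\ref{quasi_regular-1} and \ref{quasi_regular-2} to $\F_4$ along the Lie group inclusion $j\colon\F_4\to\E_6$, which is split after localization at $p$ for $p\ge 5$: there is a retraction $r\colon\E_6\to\F_4$ with $r\circ j\simeq\mathrm{id}$, and the associated product decomposition is $\E_6\simeq\F_4\times B(9,17)$ when $p=5$ and $\E_6\simeq\F_4\times S^9\times S^{17}$ when $p=7$, as recorded in the mod $p$ decomposition table. I would first note that such a product decomposition automatically refines the mod $p$ decompositions: writing $\E_6\simeq\F_4\times Q$ gives $H_*(\E_6)=\Lambda(\widetilde H_*(A^{\F_4}\vee A^Q))$, so we may take $A^{\E_6}=A^{\F_4}\vee A^Q$ with $\lambda^{\E_6}$ restricting to $j\circ\lambda^{\F_4}$ on $A^{\F_4}$; hence for every index $i$ for which the factor $B_i^{\E_6}$ of the mod $p$ decomposition lies in $\F_4$ (equivalently, the $i$-th factor of $Q$ is trivial) we get $A_i^{\F_4}=A_i^{\E_6}$ and $j\circ\lambda_i^{\F_4}\simeq\lambda_i^{\E_6}$. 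Since $j$ is split injective it induces an isomorphism on $\pi_3$, so $j_*\epsilon=\pm\epsilon$ as well.

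Granting this, naturality of the Samelson product along the H-map $j$ gives $j\circ\langle\epsilon,\lambda_i^{\F_4}\rangle\simeq\langle\epsilon,\lambda_i^{\E_6}\rangle$ for each such $i$, and since $j$ is split injective it is injective on homotopy groups; therefore these two maps have the same order, i.e.\ $\gamma_i(\F_4,p)=\gamma_i(\E_6,p)$. For an index $i$ whose factor $B_i^{\F_4}$ is trivial, $\lambda_i^{\F_4}$ is nullhomotopic and $\gamma_i(\F_4,p)=1$. Next I would read off the relevant data: the factors of $\F_4$ are $B(3,11),B(15,23)$ at $p=5$ and $B(3,15),B(11,23)$ at $p=7$, and the table of possibly non-trivial products in this section shows that in $\F_4$ the product $\langle\epsilon,\lambda_i\rangle$ can be non-trivial only for $(p,i)=(5,3)$ and $(7,5)$; both of these indices correspond to factors common to $\F_4$ and $\E_6$, whereas the possibly non-trivial $\E_6$-product at $p=7$ with $i=2$ is supported on the complementary factor $S^{17}$ and so contributes nothing to $\F_4$. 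Combining this with Corollaries~\ref{quasi_regular-1} and \ref{quasi_regular-2} then yields $\gamma_3(\F_4,5)=\gamma_3(\E_6,5)=5^2$ and $\gamma_5(\F_4,7)=\gamma_5(\E_6,7)=1$, while every other $\langle\epsilon,\lambda_i\rangle$ in $\F_4$ vanishes; for the remaining quasi-$p$-regular prime $p=11$ the table already records that no $\langle\epsilon,\lambda_i\rangle$ in $\F_4$ can be non-trivial.

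The only step that is not entirely formal is the first one --- that the splitting of $j$ can be chosen to respect the mod $p$ decompositions and the maps $\lambda_i$ --- since the mod $p$ decomposition of a Lie group is not functorial in general. This is exactly where one uses that the splitting is realized by the group homomorphisms $j$ and $r$ themselves: the product decomposition $\E_6\simeq\F_4\times Q$ then displays the mod $p$ decomposition of $\F_4$ as a sub-product of that of $\E_6$, and the identifications $A_i^{\F_4}=A_i^{\E_6}$ and $j\circ\lambda_i^{\F_4}\simeq\lambda_i^{\E_6}$ follow by inspecting homology. Once this is in place, no computation beyond the one already carried out for $\E_6$ is required.
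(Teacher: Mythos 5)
Your argument is correct and is essentially the paper's own proof: the paper justifies this corollary in one line by observing that $\F_4$ is a $p$-local retract of $\E_6$ (via the splittings $\E_6\simeq\F_4\times B(9,17)$ at $p=5$ and $\E_6\simeq\F_4\times S^9\times S^{17}$ at $p=7$) and transferring Corollaries~\ref{quasi_regular-1} and \ref{quasi_regular-2}, exactly as you do, with your write-up merely supplying the naturality and injectivity details the paper leaves implicit. Note only that the stated case $(p,i)=(5,2)$ in the corollary appears to be a typo for $(5,3)$, consistent with the table and with your reading.
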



\section{The case $(G,p)=(\E_7,7),(\E_8,7)$}

We first consider the case $(G,p)=(\E_7,7)$, where $\E_7\simeq B(3,15,27)\times B(11,23,35)\times S^{19}$. As in \cite{MNT}, the H-spaces $B(3,15,27)$ and $B(11,23,35)$ are direct product factors of the mod~$p$ decomposition of $\SU(18)$. So we can determine their low dimensional homotopy groups from those of $\SU(18)$. Therefore Theorems \ref{homotopy group of sphere} and \ref{homotopy group of Bi} imply that $\langle\epsilon,\lambda_3\rangle$ is null homotopic. Thus we get:

\begin{proposition}
\label{E_7-1}
$\gamma_3(\E_7,7)=1$.
\end{proposition}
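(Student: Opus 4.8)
The plan is to show that the Samelson product $\langle\epsilon,\lambda_3\rangle$ vanishes $p$-locally at $p=7$ by a purely homotopy-theoretic argument: produce a sphere (or at worst a low-rank factor) of $\E_7$ through which $\langle\epsilon,\lambda_3\rangle$ must factor, and then observe that the relevant homotopy group of that sphere is zero in the degree in question. Here $\lambda_3\colon A_3\to\E_7$ is the restriction of $\lambda$ to the wedge summand $A_3$ lying in the factor $B_3$; from the mod $7$ decomposition $\E_7\simeq B(3,15,27)\times B(11,23,35)\times S^{19}$, the relevant generating subspace $A_3$ involves the cells in degrees $\equiv 2\cdot 3+1 = 7\pmod{12}$, so $A_3\subset B(3,15,27)$ is built from the bottom cell $S^3$ and the cell in degree $15$ (and $27$). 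The Samelson product $\langle\epsilon,\lambda_3\rangle$ is a map out of $S^3\wedge A_3$, whose cells sit in degrees $6$, $18$, $30$.

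First I would invoke the observation, already used in the excerpt, that $B(3,15,27)$ and $B(11,23,35)$ occur as direct factors of the mod $7$ decomposition of $\SU(18)$; this is the content of the Mimura–Nishida–Toda splitting cited as \cite{MNT}. Consequently their homotopy groups in the low-dimensional range of interest agree with the corresponding homotopy groups of $\SU(18)$, which are in turn governed by Toda's computation (Theorem~\ref{homotopy group of sphere}) together with the fibrations $S^{2k-1}\to\SU(k)\to\SU(k-1)$. Next I would decompose $\langle\epsilon,\lambda_3\rangle$ according to the cells of $A_3$: since $\epsilon$ lands in the $S^3$-factor and $\lambda_3$ lands in $B(3,15,27)$, the Samelson product lands in $B(3,15,27)$ as well (the off-diagonal Samelson products between distinct mod $p$ factors of a homotopy-commutative-up-to-the-needed-range product are controlled by Theorem in the $p$-regular section, but more simply here $\langle\epsilon,-\rangle$ restricted to the $B(3,15,27)$-factor stays in that factor). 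So it suffices to show $[S^3\wedge A_3, B(3,15,27)]=0$ in the range where the cells of $S^3\wedge A_3$ live, i.e. degrees $6,18,30$.

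Then the concrete step is a dimension count: the nontrivial $p$-torsion in $\pi_*(\SU(18))_{(7)}$ (hence in $\pi_*(B(3,15,27))_{(7)}$) in this range occurs only in degrees congruent to $-1$ or $-2$ mod $2(p-1)=12$ coming from the sphere summands, and one checks that none of $6,18,30$ (the dimensions of the attaching spheres of $S^3\wedge A_3$) maps nontrivially — every relevant $\pi_{2m}(B(3,15,27))_{(7)}$ that could receive the image of a cell of $S^3\wedge A_3$ is zero by Theorems~\ref{homotopy group of sphere} and \ref{homotopy group of Bi}. Assembling these via the cofibration sequence for $S^3\wedge A_3$ (or its Atiyah–Hirzebruch-type filtration) gives $\langle\epsilon,\lambda_3\rangle\simeq *$, hence $\gamma_3(\E_7,7)=1$.

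The main obstacle I anticipate is bookkeeping rather than conceptual: one must be careful that $S^3\wedge A_3$ may have cells not only in degrees $6,18,30$ but also products/higher cells if $A_3$ is genuinely $B(3,15,27)$ rather than a wedge of spheres — so one should confirm that the retractile structure makes $A_3$ a wedge $S^3\vee S^{15}\vee S^{27}$ (it is, since $B(3,15,27)$ has rank $3<p=7$ and is retractile), and then track each of the three smash factors $S^6$, $S^{18}$, $S^{30}$ separately against Theorem~\ref{homotopy group of Bi} applied with the appropriate shift, plus the extension problems in the cellular filtration. Once the homotopy groups are tabulated this is routine, and the conclusion $\gamma_3(\E_7,7)=1$ follows immediately.
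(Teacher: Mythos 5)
There is a genuine gap, and it begins with a misidentification of the space $A_3$. You correctly note that $A_3$ consists of the cells of $\E_7$ in degrees congruent to $2\cdot 3+1=7$ modulo $2(p-1)=12$, but the cells of $B(3,15,27)$ sit in degrees $3,15,27\equiv 3\pmod{12}$, so that factor is $B_1$, not $B_3$. The only cell of $\E_7$ in a degree $\equiv 7\pmod{12}$ is the $19$-cell, so $B_3=A_3=S^{19}$ and $\langle\epsilon,\lambda_3\rangle$ is a single element of $\pi_{22}(\E_7)_{(7)}$. The paper's proof is exactly the dimension count you intend, but applied to this group: $\pi_{22}(S^{19})_{(7)}=0$ by Theorem~\ref{homotopy group of sphere}, and $\pi_{22}\bigl(B(3,15,27)\times B(11,23,35)\bigr)_{(7)}$ is a summand of $\pi_{22}(\SU(18))_{(7)}=0$ (stable range, even degree), whence the whole group vanishes and the Samelson product is null homotopic.

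The misidentification is not harmless, because the argument you actually run is an attempted proof that $\langle\epsilon,\lambda_1\rangle$, the Samelson product with $A(3,15,27)$, vanishes, and that statement is false: Proposition~\ref{E_7-3} shows $\gamma_1(\E_7,7)=7$. Concretely, your dimension count fails on the top cell: $\pi_{30}(S^{19})_{(7)}\cong\Z/7\Z$ (take $k=11=2(p-1)-1$ in Theorem~\ref{homotopy group of sphere}), and this is precisely where $\langle\epsilon,\lambda_1\rangle$ is detected. The step that hides this is your claim that a Samelson product of two maps landing in $B(3,15,27)$ again lands in $B(3,15,27)$: the mod $p$ decomposition is a decomposition of spaces, not of H-spaces, so Samelson products routinely leak into other factors --- this is the entire content of the theorem from \cite{HKO} quoted in the $p$-regular section, where $\langle\lambda_i,\lambda_j\rangle$ is detected in the factor indexed by $n_k=n_i+n_j-(p-1)$. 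Any argument of this type must therefore test the cells of $S^3\wedge A_i$ against all of $\pi_*(\E_7)_{(7)}$, not just against one factor.
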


Let $A(11,23,35)$ be a homology generating subspace of $B(11,23,35)$.

\begin{proposition}
\label{E_7}
For $(G,p)=(\E_7,7)$, there is an injection 
$$\Phi\colon[S^3\wedge A(11,23,35),F]\to H^{38}(S^3\wedge A(11,23,35);\Z_{(7)})\cong\Z_{(7)}$$
such that $\mathrm{Im}\,\Phi\circ\delta_*$ is generated by $1$, implying that $[S^3\wedge A(11,23,35),F]=0$.
\end{proposition}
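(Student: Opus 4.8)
The plan is to apply Theorem~\ref{HKMO} to the pair $(G,p)=(\E_7,7)$ with the choice $i=5$, so that $A_i = A(11,23,35)$ is the homology generating subspace in question, and then compute the image of $\delta_*$ explicitly via the Chern character. First I would verify the three hypotheses of Theorem~\ref{HKMO}. Since $i=5$ and $p-1=6$, the relevant index is $j\equiv i+2 = 7 \equiv 1 \pmod 6$, i.e.\ $j=1$, and from the mod $7$ decomposition table $\E_7$ at $p=7$ has $B_1 = B(3,15,27) = B(3,15,27)$, which is of the required form $B(2j+1,2j+1+2(p-1),2j+1+4(p-1))$ with $j=1$, $r=2$. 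Condition (2), that the composite $B_1\to\E_7\xrightarrow{\rho}\SU(\infty)\to C_1$ is surjective in cohomology, follows from the computation of the Chern classes of $\rho$ carried out in \cite{HKMO} (this is exactly the input that was used in Propositions~\ref{quasi-regular}--\ref{quasi_regular-5}). Condition (3) is the dimension bound $\dim A(11,23,35) \le 2j-3+2(r+2)(p-1) = -1 + 8\cdot 6 = 47$; since the top cell of $A(11,23,35)$ is in dimension $35$, this holds. With $n = j+(r+1)(p-1) = 1 + 18 = 19$, Theorem~\ref{HKMO} then produces the injection $\Phi\colon[S^3\wedge A(11,23,35),F]\to H^{38}(S^3\wedge A(11,23,35);\Z_{(7)})\oplus H^{38+12}(\cdots)$.

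The next point is that the second summand $H^{50}(S^3\wedge A(11,23,35);\Z_{(7)})$ vanishes: the cells of $S^3\wedge A(11,23,35)$ sit in dimensions $3+11=14$, $3+23=26$, $3+35=38$, so $H^{50}=0$ and the target reduces to $H^{38}(S^3\wedge A(11,23,35);\Z_{(7)})\cong\Z_{(7)}$, generated by the suspension of the top class coming from the $35$-cell of $A(11,23,35)$. This is what gives the stated form of $\Phi$.

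It then remains to show $\mathrm{Im}\,\Phi\circ\delta_*$ is all of $\Z_{(7)}$, i.e.\ is generated by $1$. By Theorem~\ref{HKMO}, $\Phi(\delta_*(\xi)) = 19!\,\ch_{19}(\xi)$ for $\xi\in\widetilde K(S^3\wedge A(11,23,35))$ (the $n+p-1 = 25$ component lands in the now-trivial second summand). As in the proof of Proposition~\ref{quasi-regular}, I would take $\xi = \beta\wedge\bar\rho$ where $\bar\rho\colon\Sigma A(11,23,35)\to B\SU(\infty)$ is the adjoint of $\rho\circ\lambda_5$ and $\beta$ generates $\widetilde K(S^2)$, together with the class $\eta$ obtained from the pinch map $S^3\wedge A(11,23,35)\to S^{38}$ and a generator of $\pi_{38}(B\U(\infty))$. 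The Chern character of $\rho\circ\lambda_5$ is read off from the Chern classes of $\rho$ computed in \cite{HKMO}; the point is that $\ch_{38/2}(\eta) = u_{38}$ while $\ch(\xi)$ contributes a $7$-adically smaller multiple of $u_{38}$, so that the $7$-component of the subgroup generated by $19!\,\ch_{19}(\xi)$ and $19!\,\ch_{19}(\eta)$ is exactly $\Z_{(7)}$. Since $\nu_7(19!) = \lfloor 19/7\rfloor = 2$ and $\eta$ already carries $u_{38}$ with unit coefficient, while the relevant Chern character term of $\xi$ carries a compensating factor, the stated conclusion $\mathrm{Im}\,\Phi\circ\delta_* = \Z_{(7)}$, hence $[S^3\wedge A(11,23,35),F]=0$, follows.

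The main obstacle is the bookkeeping in the last step: one must extract the precise $2\cdot 19$-dimensional component of the Chern character of the restriction of $\rho$ to $B(11,23,35)$ from the data in \cite{HKMO} and track the $7$-adic valuations of the resulting coefficients (in particular checking that $19!\,\ch_{19}$ applied to the pinch-map class $\eta$ is a $7$-local unit, which makes the surjectivity immediate). Everything else---verifying the hypotheses of Theorem~\ref{HKMO} and the vanishing of $H^{50}$---is routine once the decomposition table and the cell structure of $A(11,23,35)$ are in hand.
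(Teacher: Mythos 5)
Your setup is exactly the paper's: apply Theorem~\ref{HKMO} with $i=5$, $j=1$, $r=2$, $n=19$, observe that $S^3\wedge A(11,23,35)$ has cells only in dimensions $14$, $26$, $38$ so that the second summand $H^{50}$ vanishes and the target collapses to $H^{38}\cong\Z_{(7)}$, and then compute $\mathrm{Im}\,\Phi\circ\delta_*$ from the Chern character of $\xi=\beta\wedge\bar\rho$. The hypothesis checks and the reduction to a single summand are correct and match the paper.

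However, the closing parenthetical --- that surjectivity is immediate once one checks that $19!\,\ch_{19}(\eta)$ is a $7$-local unit for the pinch-map class $\eta$ --- is false and points at the wrong element. Since $\ch(\eta)=u_{38}$, one has $19!\,\ch_{19}(\eta)=19!\,u_{38}$ with $\nu_7(19!)=2$, so $\eta$ contributes only $7^2\Z_{(7)}$ to the image and can never give surjectivity. The element that does the work is $\xi=\beta\wedge\bar\rho$: the paper computes $\ch(\xi)=-\tfrac{1}{5!}u_{14}-\tfrac{9}{2\cdot 11!}u_{26}-\tfrac{1229}{60\cdot 17!}u_{38}$, and the whole point is that $\nu_7(17!)=2=\nu_7(19!)$ while $1229$ and $60$ are prime to $7$, so $19!\cdot\tfrac{1229}{60\cdot 17!}=\tfrac{1229\cdot 18\cdot 19}{60}$ is a $7$-local unit. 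Your main text does gesture at this (``a compensating factor''), but the decisive fact --- that the numerator of the $u_{38}$-coefficient of $\ch(\xi)$ is prime to $7$ --- is precisely the content of the proposition and cannot be deferred as bookkeeping: had that numerator been divisible by $7$, the image would be the proper subgroup $7\Z_{(7)}$ and $[S^3\wedge A(11,23,35),F]$ would not vanish. So the route is the paper's, but the one nontrivial step is both misattributed (to $\eta$) and left unverified.
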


\begin{proof}
The proof is similar to that for Proposition \ref{quasi-regular} once we calculate $\mathrm{Im}\,\Phi\circ\delta_*$. 
Let $\bar{\rho}\colon\Sigma A(11,23,35)\to B\SU(\infty)$ be the adjoint of $\rho\circ\lambda_5\colon A(11,23,35)\to\SU(\infty)$ and put $\xi=\beta\wedge\bar{\rho}\in\widetilde{K}(S^3\wedge A(11,23,35))$ for a generator $\beta\in\widetilde{K}(S^2)$. Then by \eqref{HKMO} we have
$$\ch(\xi)=-\frac{1}{5!}u_{14}-\frac{9}{2\cdot 11!}u_{26}-\frac{1229}{60\cdot 17!}u_{38}$$
where $u_i$ is a generator of $H^i(S^3\wedge A(11,23,35);\Z_{(7)})\cong\Z_{(7)}$ for $i=14,26,38$. 
Thus we get $\mathrm{Im}\,\Phi\circ\delta_*$ as desired.
\end{proof}

\begin{corollary}
\label{E_7-2}
$\gamma_5(\E_7,7)=1$.
\end{corollary}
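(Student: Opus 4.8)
The plan is to read off the corollary from Proposition~\ref{E_7} together with the general reduction recalled after Theorem~\ref{HKMO}. The first step is bookkeeping: in the mod~$7$ decomposition $\E_7\simeq B(3,15,27)\times B(11,23,35)\times S^{19}$, the factor indexed by $i=5$ is $B(11,23,35)$, because its degrees $11,23,35$ are precisely the integers congruent to $2\cdot 5+1=11$ modulo $2(p-1)=12$. Hence we may take the homology generating subspace $A_5$ to be $A(11,23,35)$, and $\gamma_5(\E_7,7)$ is by definition the order of the Samelson product $\langle\epsilon,\lambda_5\rangle\colon S^3\wedge A(11,23,35)\to\E_7$.

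Next I would invoke the machinery from \cite{HKMO}. Since $\SU(\infty)$ is homotopy commutative, $\langle\epsilon,\lambda_5\rangle$ lifts through $\delta$ to a map $\widetilde\theta\colon S^3\wedge A(11,23,35)\to F$. The complex $A(11,23,35)$ has cells only in the odd dimensions $11,23,35$, so $Z=S^3\wedge A(11,23,35)$ has cells only in the even dimensions $14,26,38$; thus $\widetilde{K}^{-1}(Z)=0$, the isomorphism~\eqref{[Z,G]} applies, and the order of $\langle\epsilon,\lambda_5\rangle$ equals the order of the class of $\widetilde\theta$ in the coset $[Z,F]/\mathrm{Im}\,\delta_*\cong[Z,\E_7]$.

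Finally, Proposition~\ref{E_7} asserts that $[Z,F]=0$; in particular the coset $[Z,F]/\mathrm{Im}\,\delta_*$ is trivial, so $\langle\epsilon,\lambda_5\rangle$ is null homotopic and $\gamma_5(\E_7,7)=1$. There is essentially no obstacle left at this stage, since all the content has been packed into Proposition~\ref{E_7}, whose proof in turn consists of computing $\mathrm{Im}\,\Phi\circ\delta_*$ from the Chern character of $\rho$ restricted to $A(11,23,35)$ and noting that it is generated by a unit of $\Z_{(7)}$. The only points needing a moment's attention are the identification of the index ($i=5$ corresponds to $B(11,23,35)$) and the verification that $Z$ has only even-dimensional cells, so that~\eqref{[Z,G]} may be used; both are immediate.
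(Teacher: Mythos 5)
Your argument is correct and is essentially the paper's own: the corollary is read off directly from Proposition~\ref{E_7} by lifting $\langle\epsilon,\lambda_5\rangle$ to $F$ and using that, for $Z=S^3\wedge A(11,23,35)$ with only even-dimensional cells, the order of the Samelson product equals the order of the lift in $[Z,F]/\mathrm{Im}\,\delta_*$, which is trivial since $\mathrm{Im}\,\Phi\circ\delta_*$ contains a unit of $\Z_{(7)}$. Your index bookkeeping identifying $i=5$ with the factor $B(11,23,35)$ is also correct.
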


\begin{proposition}
\label{E_7-3}
$\gamma_1(\E_7,7)=7$.
\end{proposition}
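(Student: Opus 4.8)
The plan is to proceed as for the other indices: bound $\gamma_1(\E_7,7)$ above by $7$ using homotopy groups, and then show that $\langle\epsilon,\lambda_1\rangle$ is non-trivial. For the upper bound, write $A_1=A(3,15,27)$, so that $S^3\wedge A_1$ is a complex with cells only in dimensions $6$, $18$ and $30$. As recalled at the start of this section, $B(3,15,27)$ and $B(11,23,35)$ are direct product factors of the mod $7$ decomposition of $\SU(18)$, so in the relevant range their homotopy groups are direct summands of $\pi_*(\SU(18))\cong\pi_*(\SU)$, which vanishes in even degrees. Combined with Theorem \ref{homotopy group of sphere} applied to the sphere factor $S^{19}$, this gives $\pi_6(\E_7)=\pi_{18}(\E_7)=0$ and $\pi_{30}(\E_7)\cong\Z/7\Z$, generated by $S^{19}\hookrightarrow\E_7$ composed with a generator of $\pi_{30}(S^{19})\cong\Z/7\Z$. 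Restricting $\langle\epsilon,\lambda_1\rangle$ successively to the $6$- and $18$-skeleta of $S^3\wedge A_1$ then produces null maps, so $\langle\epsilon,\lambda_1\rangle$ factors through the pinch map onto the top cell $S^{30}$ followed by an element of $\pi_{30}(\E_7)\cong\Z/7\Z$; hence $7\langle\epsilon,\lambda_1\rangle=0$ and $\gamma_1(\E_7,7)\mid 7$.

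For the non-triviality I would adapt the $K$-theoretic method of Proposition \ref{quasi-regular}. Since $\SU(\infty)$ is homotopy commutative, $\langle\epsilon,\lambda_1\rangle$ lifts to the homotopy fibre $F$ of $\rho\colon\E_7\to\SU(\infty)$, and it suffices to show that a lift is non-zero in $[S^3\wedge A_1,F]/\mathrm{Im}\,\delta_*$. Theorem \ref{HKMO} does not apply directly, because its hypothesis (1) fails for $i=1$ (the factor $B_3=S^{19}$ is not of the required form $B(7,19,\ldots)$), so the plan is to build the relevant comparison map $\Phi$ by hand, to compute $\mathrm{Im}\,\Phi\circ\delta_*$ from $\ch(\beta\wedge\bar\rho)$ in degrees $6$, $18$, $30$, and to compute $\Phi$ of a convenient lift of $\langle\epsilon,\lambda_1\rangle$ from the Chern classes of the $56$-dimensional representation $\rho$ recorded in \cite{HKMO}, and finally to check that the latter does not lie in the former. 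A fallback would be the Steenrod-operation argument of Proposition \ref{quasi_regular-5}, working in $H^*(B\E_7;\Z/7)=\Z/7[x_4,x_{12},x_{16},x_{20},x_{24},x_{28},x_{36}]$ with the action of $\mathcal{P}^1$ from \cite{HKMO}.

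The hard part is precisely the non-triviality. Unlike the sphere case in Proposition \ref{quasi_regular-5}, here $A_1$ is a three-cell complex whose bottom cell maps to $\E_7$ by a generator of $\pi_3(\E_7)$, so $\bar\lambda_1$ detects $x_4$ in addition to the $B(3,15,27)$-generators $x_{16}$ and $x_{28}$, and $H^*(S^4\times\Sigma A_1;\Z/7)$ is too sparse for the naive Whitehead-product argument to force a contradiction via a single primary operation; similarly, the clean invariant of Theorem \ref{HKMO} is unavailable. I expect the resolution to come from combining the factorisation through $S^{30}$ established above with an explicit computation of $\pi_{30}(F)$ and of the low-degree part of the Chern character of $\rho\circ\lambda_1$, which should pin down the image of $\langle\epsilon,\lambda_1\rangle$ in $\pi_{30}(\E_7)\cong\Z/7\Z$ and exhibit it as a generator. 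Together with the upper bound this yields $\gamma_1(\E_7,7)=7$.
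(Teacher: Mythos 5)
Your upper bound is exactly the paper's: the Samelson product factors through $S^{19}\subset\E_7$, hence through the pinch map $S^3\wedge A(3,15,27)\to S^{30}$ followed by an element of $\pi_{30}(S^{19})_{(7)}$, which is killed by $7$. That half is fine.

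The gap is the non-triviality, which you explicitly leave open: your second and third paragraphs are a list of strategies ("the plan is to build $\Phi$ by hand", "I expect the resolution to come from\dots") rather than an argument, and no computation of $\pi_{30}(F)$, of a hand-built $\Phi$, or of the relevant Chern character is actually carried out. Worse, you talk yourself out of the route the paper actually takes. The paper proves non-triviality by the Whitehead-product/Steenrod-operation method of Proposition~\ref{quasi_regular-5}: it shows that in $H^*(B\E_7;\Z/7)=\Z/7[x_4,x_{12},x_{16},x_{20},x_{24},x_{28},x_{36}]$ the class $\mathcal{P}^1x_{20}$ contains the term $-x_4x_{28}$ and has no linear term. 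This is extracted not from the $56$-dimensional representation but by restricting along $j\colon\Spin(11)\to\E_7$, using the identifications $j^*(x_4)=p_1$, $j^*(x_{20})=p_5$, $j^*(x_{28})=-2p_5p_2+\cdots$ from \cite{HKMO} together with the mod $p$ Wu formula for $\mathcal{P}^1p_5$. The point of choosing $x_{20}$ is that it corresponds to the sphere factor $S^{19}$ through which $\langle\epsilon,\lambda_1\rangle$ factors, while $x_4$ and $x_{28}$ restrict to the bottom and top cells of $\Sigma A_1$, so an extension $\mu\colon S^4\times\Sigma A_1\to B\E_7$ of $\bar\epsilon\vee\bar\lambda_1$ is incompatible with $\mathcal{P}^1x_{20}\ni -x_4x_{28}$. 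Your observation that $A_1$ being a three-cell complex makes this more delicate than the $S^{17}$ case of Proposition~\ref{quasi_regular-5} (here $H^{20}(S^4\times\Sigma A_1;\Z/7)$ is nonzero, generated by $u_4\otimes u_{16}$, so $\mu^*(x_{20})$ need not vanish outright and one must also control $\mathcal{P}^1u_{16}$) is a legitimate point that the paper's "proceed as in Proposition~\ref{quasi_regular-5}" compresses; but the correct response is to supply that bookkeeping within the Steenrod argument, not to abandon it for an unexecuted $K$-theoretic computation. As written, your proposal does not establish $\gamma_1(\E_7,7)\geq 7$, so the proposition is not proved.
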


\begin{proof}
By Theorems \ref{homotopy group of sphere} and \ref{homotopy group of Bi}, the Samelson product $\langle\epsilon,\lambda_1\rangle$ in $\E_7$ at $p=7$ factors through $S^{19}\subset\E_7$, so it is the composite of the pinch map onto the top cell $S^3\wedge A(3,15,27)\to S^{30}$ and an element of $\pi_{30}(S^{19})_{(7)}$. As $7\cdot\pi_{30}(S^{19})_{(7)}=0$, we get $\gamma_1(\E_7,7)\le 7$. 

Next, we show that $\langle\epsilon,\lambda_1\rangle$ is non-trivial. Recall that the mod 7 cohomology of $B\E_7$ is  
$$H^*(B\E_7;\Z/7)=\Z/7[x_4,x_{12},x_{16},x_{20},x_{24},x_{28},x_{36}],\quad|x_i|=i.$$
If we show that $\mathcal{P}^1x_{20}$ includes $\alpha x_4x_{28}$ for $\alpha\ne 0$ and does not include the linear term then we may proceed as in the proof of Proposition \ref{quasi_regular-5}. Let $j\colon\Spin(11)\to\E_7$ be the natural inclusion. In \cite{HKMO}, it is shown that the generators $x_i$ ($i=4,12,20,28$) are chosen as
$$j^*(x_4)=p_1,\quad j^*(x_{12})=p_3+p_2p_1,\quad j^*(x_{20})=p_5,\quad j^*(x_{28})=-2p_5p_2+\text{other terms,}$$
where $H^*(B\Spin(11);\Z/7)=\Z/7[p_1,p_2,p_3,p_4,p_5]$ and $p_i$ is the $i$-th Pontrjagin class. By  the mod $p$ Wu formula in \cite{S}, we can see that 
$$\mathcal{P}^1p_5=-p_5p_3+p_5p_2p_1+\text{other terms},$$
so for degree reasons, we get that $\mathcal{P}^1x_{20}$ includes $-x_4x_{28}$ and does not include the linear term.
\end{proof}

We next consider the case $(G,p)=(\E_8,7)$. As in the case of $(G,p)=(\E_7,7)$, we see that $\langle\epsilon,\lambda_1\rangle$ is trivial. Let $A$ be a homology generating subspace of $B(23,35,47,59)\subset\E_8$. 

\begin{proposition}
For $(G,p)=(\E_8,7)$, there is an injection 
$$\Phi\colon[S^3\wedge A,F]\to H^{50}(S^3\wedge A;\Z_{(7)})\oplus H^{62}(S^3\wedge A;\Z_{(7)})\cong(\Z_{(7)})^2$$
such that $\mathrm{Im}\,\Phi\circ\delta_*$ is generated by $(-2^2\cdot 3\cdot 7\cdot 19\cdot 199,5^3\cdot 7\cdot 13\cdot 31\cdot 61)$ and $(0,7^2)$, and a lift~$\tilde{\theta}$ of $\langle\epsilon,\lambda_5\rangle$ can be chosen such that $\Phi(\tilde{\theta})=(-2\cdot 3^2\cdot 5\cdot 11\cdot 2861,2^{-2}\cdot 3\cdot 5\cdot 2207\cdot 17977)$.
\end{proposition}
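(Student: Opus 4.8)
I would prove this exactly as Propositions~\ref{quasi-regular} and~\ref{E_7} were proved, pointing out only the data that is new. We are in the case $(G,p,i)=(\E_8,7,5)$: here $A=A(23,35,47,59)$ is a homology generating subspace of the factor $B(23,35,47,59)=B_5$ of the mod~$7$ decomposition of $\E_8$, and $\lambda_5\colon A\to\E_8$ is its inclusion. To apply Theorem~\ref{HKMO} I would take $j=1$, the unique $1\le j\le 6$ with $j\equiv i+2\bmod 6$. Then $B_1=B(3,15,27,39)=B(3,\,3+2\cdot 6,\,3+4\cdot 6,\,3+6\cdot 6)$, so in the notation of Theorem~\ref{HKMO} we have $r=3$ and $n=j+(r+1)(p-1)=25$; this explains why the target of $\Phi$ is $H^{2n}\oplus H^{2n+2(p-1)}=H^{50}\oplus H^{62}$. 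Hypothesis~(3) holds on the nose, $\dim A=59=2j-3+2(r+2)(p-1)$, and hypothesis~(2) --- surjectivity of $B_1\to\E_8\xrightarrow{\rho}\SU(\infty)\to C_1$ in cohomology --- would be read off from the Chern classes of the $248$-dimensional representation $\rho$ of $\E_8$ computed in~\cite{HKMO}, just as in Proposition~\ref{quasi-regular}. Theorem~\ref{HKMO} then yields the asserted injection $\Phi$ with $\Phi(\delta_*(\xi))=(25!\,\ch_{25}(\xi),\,31!\,\ch_{31}(\xi))$ for $\xi\in\widetilde{K}(S^3\wedge A)$.

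Next I would compute $\mathrm{Im}\,\Phi\circ\delta_*$. Since $S^3\wedge A$ has cells only in dimensions $26,38,50,62$, all gaps being even while every $K$-theory Atiyah--Hirzebruch differential has odd degree, that spectral sequence collapses, so $\widetilde{K}(S^3\wedge A)\cong(\Z_{(7)})^4$ with a basis adapted to the skeletal filtration. A convenient choice of generators is: the class $\xi=\beta\wedge\bar\rho$, where $\bar\rho\colon\Sigma A\to B\SU(\infty)$ is the adjoint of $\rho\circ\lambda_5$ and $\beta$ generates $\widetilde{K}(S^2)$; the composite $\eta$ of the pinch $S^3\wedge A\to S^{62}$ onto the top cell and a generator of $\pi_{62}(B\U(\infty))\cong\Z_{(7)}$, for which $\ch(\eta)=u_{62}$; and two further classes pinched onto the sub-skeleta, namely bottom classes of the stunted quotients $S^3\wedge A(35,47,59)$ and $S^3\wedge A(47,59)$. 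The Chern character of $\xi$ is computed from the Chern classes of $\rho$ in~\cite{HKMO} (giving, as in the proof of Proposition~\ref{quasi-regular}, an explicit $\ch(\xi)=\sum_k a_k u_{2k}$ with $a_k\in\Q$), while the Chern characters of the remaining generators carry $7$-adic denominators controlled by the attaching maps of $A(23,35,47,59)$, i.e.\ by the mod~$7$ Steenrod operations and the low-dimensional homotopy of the $B$-spaces recorded in~\cite{MT,Ki,HKMO}. Substituting into the above formula and simplifying modulo $7$-local units --- using $\nu_7(25!)=7^3$ and $\nu_7(31!)=7^4$, so that $\eta$ by itself only produces $(0,7^4)$ and the generator $(0,7^2)$ must come from a lower-skeleton class after subtracting a suitable $\Z_{(7)}$-combination of $\xi$ --- shows that $\mathrm{Im}\,\Phi\circ\delta_*$ is generated by $(-2^2\cdot 3\cdot 7\cdot 19\cdot 199,\,5^3\cdot 7\cdot 13\cdot 31\cdot 61)$ and $(0,7^2)$.

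Finally, since $\SU(\infty)$ is homotopy commutative, $\langle\epsilon,\lambda_5\rangle$ lifts to a map $\widetilde{\theta}\colon S^3\wedge A\to F$; taking the convenient lift of~\cite{HKMO} (cf.\ the proof of Proposition~\ref{quasi-regular}), $\Phi(\widetilde{\theta})$ is computed directly from the Chern classes of $\rho$, giving $(-2\cdot 3^2\cdot 5\cdot 11\cdot 2861,\,2^{-2}\cdot 3\cdot 5\cdot 2207\cdot 17977)$; as a check, the first coordinate agrees with the computation of $\Phi(\widetilde{\theta})$ at $(\E_8,19)$ and the second with the one at $(\E_8,13)$, both recording the fixed rational components $\ch_{25}$ and $\ch_{31}$ of the Chern character of $\rho$. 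The main --- and genuinely new --- difficulty, in contrast to the rank-$\le 2$ quasi-$p$-regular cases treated earlier, is the bookkeeping forced by the rank-$4$ space $A(23,35,47,59)$: beyond manipulating the large explicit Chern-class formulas for the $248$-dimensional representation of $\E_8$, one must carefully track the $7$-adic denominators in the Chern characters of the lower-skeleton generators of $\widetilde{K}(S^3\wedge A)$, which (unlike in the two-cell cases) really do contribute to $\mathrm{Im}\,\Phi\circ\delta_*$, for instance producing the generator $(0,7^2)$. This denominator bookkeeping is where I would expect to spend most of the effort, and where an error is most likely to creep in.
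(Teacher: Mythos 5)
Your setup is exactly right and matches the paper: you identify $j=1$, $r=3$, $n=25$, verify the dimension condition $\dim A(23,35,47,59)=59=2j-3+2(r+2)(p-1)$, and correctly reduce everything to computing $\mathrm{Im}\,\Phi\circ\delta_*$ from a generating set of $\widetilde{K}(S^3\wedge A)\cong(\Z_{(7)})^4$ together with $\Phi(\tilde{\theta})$; your consistency check of $\Phi(\tilde{\theta})$ against the $(\E_8,13)$ and $(\E_8,19)$ propositions is a nice touch. The divergence, and the gap, is in how you produce the two middle generators of $\widetilde{K}(S^3\wedge A)$ and their Chern characters. You propose bottom classes of the stunted quotients $S^3\wedge A(35,47,59)$ and $S^3\wedge A(47,59)$ and assert that their Chern characters have ``$7$-adic denominators controlled by the attaching maps'' recorded in \cite{MT,Ki,HKMO}. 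Those references give homotopy groups, not the $e$-invariant data of the attaching maps of $A(23,35,47,59)$, and such a pinch class is in any case only well defined modulo classes from higher skeleta, so its $\ch_{25}$ and $\ch_{31}$ are not pinned down by anything you have written. In other words, the one genuinely hard step --- which you yourself flag as ``where an error is most likely to creep in'' --- is left without a method.

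The paper's proof supplies precisely the missing device: starting from $\xi_1=\beta\wedge\bar{\rho}$, it forms
$\xi_2=\frac{1}{2^{13}\cdot 3^2}(\phi^2(\xi_1)-2^{13}\xi_1)$ and $\xi_3=\frac{1}{2^{19}\cdot 3^2\cdot 13}(\phi^2(\xi_2)-2^{19}\xi_2)$
using the Adams operation $\phi^2$, together with the top-cell class $\xi_4$. Since $\ch_n(\phi^2(\xi))=2^n\ch_n(\xi)$, the Chern characters of $\xi_2,\xi_3$ are computed formally from $\ch(\xi_1)$ (which comes from the Chern classes of $\rho$ in \cite{HKMO}), each successive class killing the lowest remaining component while its new leading coefficient is a $7$-local unit; hence $\xi_1,\dots,\xi_4$ generate $\widetilde{K}(S^3\wedge A)$ and $\mathrm{Im}\,\Phi\circ\delta_*$ drops out explicitly, with the generator $(0,7^2)$ arising from $\xi_2$ modulo $\xi_1$. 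No attaching-map or $e$-invariant input is needed. You should replace your stunted-quotient generators with this Adams-operation construction (or else actually supply the $e$-invariant computation you are implicitly invoking); as it stands your argument does not close.
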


\begin{proof}
Let $\xi_1=\beta\wedge\bar{\rho}\in\widetilde{K}(S^3\wedge A)$ be as in the proof of Proposition \ref{E_7}. Let 
$$\xi_2=\frac{1}{2^{13}\cdot 3^2}(\phi^2(\xi_1)-2^{13}\xi_1),\quad\xi_3=\frac{1}{2^{19}\cdot 3^2\cdot 13}(\phi^2(\xi_2)-2^{19}\xi_2)$$ 
and let $\xi_4$ be the composite of the pinch map $S^3\wedge A\to S^{62}$ and a generator of $\pi_{62}(B\U(\infty))$. Put $a_1=2\cdot 3^2\cdot 7,a_2=2^2\cdot 5\cdot 7\cdot 13,a_3=2\cdot 3\cdot 7\cdot 11\cdot 19\cdot 199,a_4=2\cdot 5^3\cdot 7\cdot 11\cdot 13\cdot 61$. Then by \cite{HKMO}, we have:
\begin{align*}{}
\ch(\xi_1)&=\frac{a_1}{11!}u_{26}+\frac{a_2}{17!}u_{38}-\frac{a_3}{5\cdot 23!}u_{50}+\frac{a_4}{29!}u_{62}\\
\ch(\xi_2)&=\frac{7a_2}{17!}u_{38}-\frac{7\cdot 13a_3}{23!}u_{50}+\frac{3\cdot 7\cdot 19\cdot 73a_4}{29!}u_{62}\\
\ch(\xi_3)&=-\frac{7^2a_3}{23!}u_{50}+\frac{3\cdot 5\cdot 7^2\cdot 19\cdot 73a_4}{29!}u_{62}\\
\ch(\xi_4)&=u_{62}
\end{align*}
where $u_i$ is a generator of $H^i(S^3\wedge A;\Z_{(7)})\cong\Z_{(7)}$ for $i=26,38,50,62$. Then $\mathrm{Im}\,\Phi\circ\delta_*$ can be calculated as asserted, and $\Phi(\tilde{\theta})$ can be calculated as in \cite{HKMO}.
\end{proof}

\begin{corollary}
\label{E_8}
We have $\gamma_1(\E_8,7)=1$ and $\gamma_5(\E_8,7)=7^2$.
\end{corollary}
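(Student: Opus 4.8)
The two equalities carry rather different content, and the plan is to harvest them from material already in place. For $\gamma_1(\E_8,7)$ there is nothing new to do: it was noted just before the Proposition that the Samelson product $\langle\epsilon,\lambda_1\rangle$ is null homotopic (by the homotopy-group argument, as in the case $(\E_7,7)$), so its order is $1$ and $\gamma_1(\E_8,7)=1$. The substance is therefore the claim $\gamma_5(\E_8,7)=7^2$, which I would deduce from the preceding Proposition by a purely algebraic manipulation.

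First, by Lemma~\ref{connecting} and the discussion following Theorem~\ref{HKMO}, $\gamma_5(\E_8,7)$ is the order of the chosen lift $\tilde\theta$ of $\langle\epsilon,\lambda_5\rangle$ in the quotient group $[S^3\wedge A,F]/\mathrm{Im}\,\delta_*$, where $A$ is a homology generating subspace of $B(23,35,47,59)$ and $S^3\wedge A$ has only even-dimensional cells. Since the $\Phi$ of the Proposition is an injective homomorphism with $\mathrm{Im}\,\Phi\circ\delta_*=\Phi(\mathrm{Im}\,\delta_*)$, it induces an injection
$$[S^3\wedge A,F]/\mathrm{Im}\,\delta_*\hookrightarrow (\Z_{(7)})^2/\mathrm{Im}\,\Phi\circ\delta_*,$$
so $\gamma_5(\E_8,7)$ equals the order of $\Phi(\tilde\theta)$ in $(\Z_{(7)})^2/\mathrm{Im}\,\Phi\circ\delta_*$. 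This reduces the problem to a finite computation with the two vectors generating $\mathrm{Im}\,\Phi\circ\delta_*$ and the vector $\Phi(\tilde\theta)$, all of which are listed in the Proposition.

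The computation I would run is as follows. Write the two generators of $\mathrm{Im}\,\Phi\circ\delta_*$ as $v_1=7(a,b)$ and $v_2=(0,7^2)$, where $a=-2^2\cdot 3\cdot 19\cdot 199$ and $b=5^3\cdot 13\cdot 31\cdot 61$ are units in $\Z_{(7)}$, and observe that $\Phi(\tilde\theta)=(c_1,c_2)$ has both coordinates units in $\Z_{(7)}$, since none of $2861$, $2207$, $17977$ is divisible by $7$. Choosing $\{(a,b),(0,1)\}$ as a $\Z_{(7)}$-basis of $(\Z_{(7)})^2$ turns $v_1,v_2$ into $(7,0)$ and $(0,7^2)$, so $(\Z_{(7)})^2/\mathrm{Im}\,\Phi\circ\delta_*\cong\Z/7\Z\oplus\Z/7^2\Z$, and in this basis $\Phi(\tilde\theta)=\big(c_1/a,\;c_2-(b/a)c_1\big)$. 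Its first coordinate is a unit, hence of order $7$; reducing modulo $7$ one checks that the second coordinate $c_2-(b/a)c_1$ is again a unit — equivalently $a c_2-b c_1\not\equiv 0\pmod 7$ — hence of order $7^2$. Therefore $\Phi(\tilde\theta)$ has order $7^2$, which gives $\gamma_5(\E_8,7)=7^2$.

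I do not expect a genuine obstacle here: the one nontrivial step is the single congruence modulo $7$ — equivalently, checking that $7\,\Phi(\tilde\theta)\notin\mathrm{Im}\,\Phi\circ\delta_*$ while $7^2\,\Phi(\tilde\theta)$ manifestly lies in it — and this is a one-line verification. All the real weight was already carried by the preceding Proposition, whose proof produces the Chern-character formulas and, from them, both $\mathrm{Im}\,\Phi\circ\delta_*$ and the value of $\Phi(\tilde\theta)$.
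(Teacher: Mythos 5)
Your proposal is correct and follows the same route the paper intends: $\gamma_1(\E_8,7)=1$ is exactly the remark preceding the proposition, and $\gamma_5(\E_8,7)=7^2$ is the order of $\Phi(\tilde\theta)$ in $(\Z_{(7)})^2/\mathrm{Im}\,\Phi\circ\delta_*$, which the paper leaves as an immediate arithmetic consequence of the proposition. Your key congruence checks out: modulo $7$ one has $a\equiv 2$, $b\equiv 1$, $c_1\equiv 6$, $c_2\equiv 4$, so $ac_2-bc_1\equiv 2\not\equiv 0\pmod 7$, confirming the order is $7^2$.
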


\begin{proof}
[Proof of Theorem \ref{main}]
By Propositions \ref{max}, \ref{quasi_regular-5}, \ref{E_7-1}, \ref{E_7-3} and Corollaries \ref{cor:regular}, \ref{quasi_regular-1}, \ref{quasi_regular-2}, \ref{quasi_regular-3}, \ref{quasi_regular-4}, \ref{F_4}, \ref{E_7-2}, \ref{E_8}, we obtain the following table:
\renewcommand{\arraystretch}{1.2}
\begin{table}[H] 
\centering
\begin{tabular}{l|lllllllllll} 
$p$&5&7&11&13&17&19&23&29&31&$\ge 37$\\\hline
$\gamma(\F_4,p)$&$5^2$&1&1&13&1&1&1&1&1&1\\
$\gamma(\E_6,p)$&$5^2$&7&1&13&1&1&1&1&1&1\\
$\gamma(\E_7,p)$&&7&11&1&1&19&1&1&1&1\\
$\gamma(\E_8,p)$&&$7^2$&$11^2$&13&1&19&1&1&31&1
\end{tabular}
\end{table} 
\noindent 
Therefore $\gamma(G)$ is the product of $\gamma(G,p)$ for $p\ge 5$ ($G=\F_4,\E_6$) and $p\ge 7$ ($G=\E_7,\E_8$), so the assertion of Theorem \ref{main} follows from Theorem \ref{KKT}. The $\G_2$ case was proved in \cite{KTT}.
\end{proof}


\begin{thebibliography}{99}
\bibitem{AB}M.F. Atiyah and R. Bott, The Yang-Mills equations over Riemann surfaces, \emph{Philos. Trans. Roy. Soc. London Ser. A} \textbf{308} (1983), 523-615. 

\bibitem{C} T. Cutler, The homotopy types of $Sp(3)$-gauge groups, \emph{Topol. Appl.} 
\textbf{236} (2018), 44-58. 

\bibitem{G}D.H. Gottlieb, Applications of bundle map theory, \emph{Trans. Amer. Math. Soc.} \textbf{171} (1972), 23-50.

\bibitem{HK}H. Hamanaka and A. Kono, Unstable $K^1$-group and homotopy types of certain gauge groups, \emph{Proc. Roy. Soc. Edinburgh Sect. A} \textbf{136} (2006), 149-155.


\bibitem{HKO}S. Hasui, D. Kishimoto and A. Ohsita, Samelson products in $p$-regular exceptional Lie groups, \emph{Topology Appl.} \textbf{178} (2014), 17-29. 

\bibitem{HKMO}S. Hasui, D. Kishimoto, T. Miyauchi and A. Ohsita, Samelson products in quasi-$p$-regular exceptional Lie groups, accepted by \emph{Homology, Homotopy Appl.} 

\bibitem{HKKS}S. Hasui, D. Kishimoto, A. Kono, and T. Sato, The homotopy types of $\mathrm{PU}(3)$ and $\mathrm{PSp}(2)$-gauge groups, \emph{Algebr. Geom. Topol.} \textbf{16} (2016), 1813-1825.

\bibitem{KKKT}Y. Kamiyama, D. Kishimoto, A. Kono and S. Tsukuda, Samelson products of $\mathrm{SO}(3)$ and applications, \emph{Glasgow Math. J.} \textbf{49} (2007), 405-409.

\bibitem{Ki}D. Kishimoto, Homotopy nilpotency in localized $\mathrm{SU}(n)$, \emph{Homology, Homotopy Appl.} \textbf{11} (2009), no. 1, 61-79.

\bibitem{KK}D. Kishimoto and A. Kono, On the homotopy types of $\Sp(n)$ gauge groups, \emph{preprint}.

\bibitem{KKT}D. Kishimoto, A. Kono, and M. Tsutaya, On $p$-local homotopy types of gauge groups, \emph{Proc. Roy. Soc. Edinburgh Sect. A} \textbf{144} (2014), 149-160.

\bibitem{KTT}D. Kishimoto, S. Theriault and M. Tsutaya, The homotopy types of $G_2$-gauge groups, \emph{Topology Appl.} \textbf{228} (2017), 92-107.

\bibitem{K}A. Kono, A note on the homotopy type of certain gauge groups, \emph{Proc. Roy. Soc. Edinburgh Sect. A} \textbf{117} (1991), 295-297.

\bibitem{L}G.E. Lang, The evaluation map and EHP sequences, \emph{Pacific J. Math.} \textbf{44} (1973), 201-210.

\bibitem{MNT}M. Mimura, G. Nishida, and H. Toda, Mod $p$ decomposition of compact Lie groups, \emph{Publ. RIMS, Kyoto Univ} \textbf{13} (1977), 627-680.

\bibitem{MT}M. Mimura and H. Toda, Cohomology operations and homotopy of compact Lie groups I, \emph{Topology} \textbf{9} (1970), 317-336.

\bibitem{S}P.B. Shay, {\it $\mod p$ Wu Formulas for the Steenrod Algebra and the Dyer-Lashof Algebra}, Proc. Amer. Math. Soc. {\bf 63} (1977), 339-347. 

\bibitem{T0} S.D. Theriault, Odd primary decompositions of gauge groups, \emph{Algebr. Geom. Top.} \textbf{10} (2010), 535-564.

\bibitem{T1}S.D. Theriault, The homotopy types of $Sp(2)$-gauge groups, \emph{Kyoto J. Math.} \textbf{50} (2010), 591-605.

\bibitem{T2}S.D. Theriault, The homotopy types of $SU(5)$-gauge groups, \emph{Osaka J. Math.} \textbf{52} (2015), 15-31.

\bibitem{T3}S.D. Theriault, Odd primary homotopy types of $SU(n)$-gauge groups, \emph{Algebr. Geom. Topol.} \textbf{17} (2017), no. 2, 1131-1150.

\bibitem{To}H. Toda, \emph{Composition methods in homotopy groups of spheres}, Ann. of Math. Stud. \textbf{49}, Princeton, 1962.

\end{thebibliography}
\end{document}